\newcommand{\keywords}[1]{\par\addvspace\baselineskip
\noindent\keywordname\enspace\ignorespaces#1}
\def\sE{{\mathscr E}}
\def\sF{{\mathscr F}}
\def\bR{{\mathbb{R}}}
\def\sG{{\mathscr G}}
\def\sA{{\mathscr A}}
\def\bR{{\mathbb R}}
\def\EE{{\mathscr E}}
\def\FF{{\mathscr F}}
\def\sE{{\mathscr E}}
\def\sF{{\mathscr F}}
\def\sA{{\mathscr A}}
\def\bH{\mathbf{H}}
\def\b0{{\textbf{0}}}
\def\<{\langle}
\def\>{\rangle}
\begin{document}


\title{On trace of Brownian motion on the boundary of a strip}

\titlerunning{Trace of Brownian motion}

%
%


\author{Liping Li${}^{1,2}$ and Wenjie Sun${}^{3}$%
\thanks{The first named author is partially supported by NSFC (No. 11688101, No. 11801546 and No. 11931004), Key Laboratory of Random Complex Structures and Data Science, Academy of Mathematics and Systems Science, Chinese Academy of Sciences (No. 2008DP173182), and Alexander von Humboldt Foundation in Germany.\\
Email: \mailsa}%
}
\institute{${}^1$RCSDS, HCMS, Academy of Mathematics and Systems Science,\\ Chinese Academy of Sciences, Beijing, China\\
${}^2$Bielefeld University, Bielefeld, Germany \\
${}^3$School of Mathematical Sciences, Tongji University,  China.}

\authorrunning{L. Li and W. Sun}



%
%

\maketitle

\begin{abstract}
The trace of a Markov process is the time changed process of the original process on the support of the Revuz measure used in the time change.  In this paper,  we will concentrate on the reflecting Brownian motions on certain closed strips.  On one hand,  we will formulate the concrete expression of the Dirichlet forms associated with the traces of such reflecting Brownian motions on the boundary.  On the other hand,  the limits of these traces as the distance between the upper and lower boundaries tends to $0$ or $\infty$ will be further obtained.  

\keywords{Trace Dirichlet forms,  Time change,  Reflecting Brownian motion,  Mosco convergence.}
\end{abstract}

\section{Introduction}

The trace of a Markov process is the time changed process of the original process on the support of the Revuz measure used in the time change.  It has been studied by many researchers.  In \cite{CFY06} and \cite{FHY04}, the authors give a complete characterization of the traces of arbitrary symmetric Markov processes,  in terms of the Beurling-Deny decomposition of their associated Dirichlet forms and of the Feller measures of the processes.  

In this paper we will concentrate on the reflecting Brownian motion $B^T$ on a closed strip $T=\bR\times [0,\pi]$ and formulate the concrete expression of its trace $\check{B}^T$ on the boundary $\partial T=\bR\times \{0,\pi\}$ by means of the characterization in \cite{CFY06} and \cite{FHY04}.  The crucial step is to figure out the Feller measure for $\check{B}^T$.  In some previous works like \cite{AS82} and \cite[Example~2.1]{FHY04},   concrete Feller measures are obtained only for traces on certain compact hypersurfaces,  and these Feller measures are usually given by the derivatives of the Poisson kernels along the inward normal direction;  see \eqref{eq:21}.  One of the main results,  Theorem~\ref{THM1},  concludes that this formulation of the Feller measure involving Poisson kernel also holds for this unbounded $\partial T$.  With this Feller measure at hand,  we will further derive the expression of the Dirichlet form associated with $\check{B}^T$ in Theorem~\ref{THM2}.    

After a spatial transformation and scaling on $\check{B}^T$,  we can also formulate the trace $\check{B}^\ell$ of reflecting Brownian motion $B^\ell$ on $\bar{\Omega}_\ell:=\bR\times [-\frac{\pi}{2}\ell, \frac{\pi}{2}\ell]$ on the boundary $\partial \bar{\Omega}_\ell$ for a constant $\ell>0$.  Another purpose of this paper is to study the limit of $\check{B}^\ell$ as $\ell\downarrow 0$ or $\ell\uparrow \infty$.  As we will see in its Dirichlet form characterization,  $\check{B}^\ell$ enjoys jumps taking place between the two components of $\partial \bar{\Omega}_\ell$.  Then Theorem~\ref{THMD5} tells us that when $\ell\downarrow 0$,  this kind of jump becomes so frequent that its energy diverges.  In other words,  no sensible limits as $\ell\downarrow 0$ can be obtained.  However when $\ell\uparrow \infty$,  the connection between the two components of $\partial \bar{\Omega}_\ell$ is cut off,  and $\check{B}^\ell$ converges to a Markov process,  consisting of two distinct Cauchy processes on the upper and lower real lines respectively,  in a certain sense. 

The approach to prove the main results in this paper is by virtue of the theory of Dirichlet forms.  For the terminologies and notations in the theory of Dirichlet forms, we refer to \cite{CF12} and \cite{FOT11}.   The symbol $\lesssim$ (resp. $\gtrsim$) means that the left (resp. right) term is bounded by the right (resp. left) term multiplying a non-essential constant.

\section{Trace Dirichlet form on the boundary of a strip}\label{APB}

Let us consider a closed strip $T:=\mathbb{R}\times[0,\pi]$ in $\mathbb{R}^2$, and define 
\[
\begin{aligned}
  \mathscr{F}&=H^1(T),\\
 \mathscr{E} (u,u) &=
 \frac{1}{2}\int_{T} |\nabla u|^2dx,\quad u\in \mathscr{F},
 \end{aligned}
\]
where $H^1(T):=H^1(\mathring{T})=\left\{ u\in L^2(\mathring{T}): \frac{\partial u}{\partial x_i} \in L^2(\mathring{T}), i=1,2\right\}$ with $\mathring{T}:=\bR\times (0,\pi)$ and $\frac{\partial u}{\partial x_i}$ is the derivative of $u$ on $\mathring{T}$ in the sense of Schwartz distribution.  
Then $(\mathscr{E}, \mathscr{F})$ is a regular and recurrent Dirichlet form on $L^2(T)$ associated with the reflecting Brownian motion $B^T:=(B^T_t)_{t\geq 0}$ on $T$; see, e.g., \cite[\S2.2.4]{CF12}.  Let $\sigma$ be the Lebesgue measure on  $\partial T=\bR\times \{0,\pi\}$.  Note that $\sigma$ can be regarded as a measure on $T$ by imposing $\sigma(\mathring{T})=0$. The following lemma shows that $\sigma$ is a Radon smooth measure with respect to $B^T$.  

\begin{lemma}
$\sigma$ is smooth with respect to $B^T$.
\end{lemma}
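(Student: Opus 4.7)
The plan is to show that the truncations $\mathbf{1}_{F_n}\sigma$, with $F_n:=[-n,n]\times[0,\pi]$, are measures of finite $1$-energy integral (class $S_0$ in the sense of \cite{FOT11}), and then to invoke the standard characterization of smooth measures via $\mathscr{E}$-nests of $S_0$-measures.

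The key input I would establish first is a local trace inequality. For fixed $n\ge 1$, the bounded domain $T_n:=(-n,n)\times(0,\pi)$ is Lipschitz, so the classical Sobolev trace theorem supplies a continuous linear map $\gamma_n:H^1(T_n)\to L^2(\partial T_n,\sigma)$. Since the restriction $u\mapsto u|_{T_n}$ maps $\mathscr{F}=H^1(T)$ continuously into $H^1(T_n)$, restricting the trace further to $[-n,n]\times\{0,\pi\}\subset\partial T_n$ gives
\[
\int_{[-n,n]\times\{0,\pi\}}|\tilde u|^2\, d\sigma \,\le\, C_n\,\mathscr{E}_1(u,u),\qquad u\in\mathscr{F},
\]
for some $C_n>0$, where identifying the $L^2$-trace with a quasi-continuous version $\tilde u$ on $\partial T$ is standard (via approximation by smooth functions). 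Applying Cauchy--Schwarz in $L^2(\mathbf{1}_{F_n}\sigma)$ then yields $\int|\tilde u|\, d(\mathbf{1}_{F_n}\sigma)\lesssim\sqrt{\mathscr{E}_1(u,u)}$, so that $\mathbf{1}_{F_n}\sigma\in S_0$.

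Finally, the sets $F_n$ are closed and increasing with $\bigcup_n F_n=T$, so any compact $K\subset T$ lies in $F_n$ for $n$ large enough, giving $\mathrm{Cap}(K\setminus F_n)=0$ eventually and making $\{F_n\}$ an $\mathscr{E}$-nest. Combined with $\mathbf{1}_{F_n}\sigma\in S_0$ for each $n$, the standard characterization (e.g.\ \cite[Theorem~2.2.4]{FOT11} or the discussion in \cite[\S4]{CF12}) then delivers the smoothness of $\sigma$. The only substantive ingredient is the classical $H^1$-trace theorem on the bounded strip $T_n$, so I do not anticipate any serious obstacles.
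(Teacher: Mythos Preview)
Your proposal is correct and follows essentially the same approach as the paper: truncate $\sigma$ to bounded pieces, show each truncation lies in $S_0$ via the Sobolev trace theorem plus Cauchy--Schwarz, and conclude smoothness. The only cosmetic differences are that the paper invokes the trace inequality globally on the full strip $T$ (rather than on the bounded $T_n$) and finishes by arguing ``each $\sigma_n\in S_0$ $\Rightarrow$ $\sigma$ charges no polar sets $\Rightarrow$ Radon $+$ no polar sets $\Rightarrow$ smooth'' instead of the nest-of-$S_0$ characterization you cite; both routes are standard and interchangeable.
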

\begin{proof}
Take $n>0$ and set $\sigma_n:=1_{\{x=(x_1,x_2)\in \partial T: |x_1|<n\}}\cdot \sigma$.  By means of the trace theorem,  we have for any $u\in \mathscr{F}\cap C_c(T)$,
\[
	\int_{T} |u(x)|\sigma_n(dx)\leq \left(\int_{\partial T} |u(x)|^2 \sigma(dx)\right)^{1/2} \cdot \sigma_n(\partial T)^{1/2}\lesssim \sigma_n(\partial T)^{1/2} \mathscr{E}_1(u,u).  
\]
This implies that $\sigma_n$ is a measure of finite energy integral and hence charges no polar sets.  As a result, $\sigma$ charges no polar sets  either. Since $\sigma$ is Radon,  we can conclude that $\sigma$ is smooth with respect to $B^T$.  
\end{proof}

Clearly, the quasi support of $\sigma$ is the closed set $\partial T$.  Set 
\[
	L^2(\partial T):=L^2(\partial T, \sigma)=\{f\text{ on }\partial T:  f_0(\cdot):=f(\cdot, 0),  f_\pi(\cdot):=f(\cdot, \pi)\in L^2(\bR)\}.
\] 
Denote the trace Dirichlet form of $(\mathscr{E}, \mathscr{F})$ on $\partial T$ with respect to $\sigma$ by $(\check{\mathscr{E}},\check{\mathscr{F}})$,  i.e.
\[
\begin{aligned}
	\check{\sF}&= \sF_e|_{\partial T} \cap L^2(\partial T),\\\
	\check{\sE}(\varphi, \phi)&=\sE(\bH \varphi,\bH \phi),\quad \forall \varphi, \phi\in \check{\sF}, 
\end{aligned}
\]
where $\sF_e=H^1_e(\mathring{T})=\{u\in L^2_\mathrm{loc}(\mathring{T}): \frac{\partial u}{\partial x_i}\in L^2(\mathring{T}), i=1,2\}$ is the extended Dirichlet space of $(\sE,\sF)$ (see, e.g.,  \cite[Theorem~2.2.13]{CF12}) and
$$
\bH \phi(x):=\mathbf{E}_x\left[\phi(B^T_{\tau}), \tau<\infty\right],\quad  \tau:=\{t>0:B^T_t\notin \mathring{T}\}. 
$$
Note that $(\check{\mathscr{E}},\check{\mathscr{F}})$ is a regular Dirichlet form on $L^2(\partial T)$ associated with the Markov process
\[
	\check{B}^T_t:=B^T_{\varsigma_t},
\]
where $\varsigma_t:=\{s>0: L_t>s\}$ is the right inverse of $L_t$,  the positive continuous additive functional (PCAF in abbreviation) corresponding to $\sigma$ with respect to $B^T$;  see, e.g.,  \cite[Theorem~6.2.1]{FOT11}.  

\section{Poisson kernel and Feller measure}

To formulate the concrete expression of $(\check{\sE},\check{\sF})$,  the crucial step is to obtain the so-called Feller measure $U(d\xi d\xi')$ on $\partial T\times \partial T\setminus d$, where $d$ is the diagonal of $\partial T\times \partial T$.  More precisely,  for $\varphi, \phi\in b\mathcal{B}_+(\partial T)$,  
\begin{equation}\label{eq:1}
	U(\varphi\otimes \phi)=\lim_{\alpha\uparrow \infty} U_\alpha(\varphi \otimes \phi),
\end{equation}
where $U_\alpha(\varphi\otimes \phi):=\alpha \int_{\mathring{T}} \bH^\alpha \varphi(x)\bH \phi(x)dx$ with $\bH^\alpha \varphi(x):=\mathbf{E}_x\left[e^{-\alpha \tau}\varphi(B^T_{\tau}), \tau<\infty\right]$; see, e.g., \cite{FHY04}.  

When the trace is considered on a compact hypersurface $\partial D$ of a (bounded) domain $D$ instead,  it has shown in,  e.g.,  \cite[Theorem~A.3.2]{AS82} and \cite[Example~2.1]{FHY04},  that the Feller measure $U_{\partial D}(d\xi d\xi')$ admits a density $U_{\partial D}(\xi,\xi')$ with respect to $\sigma_{\partial D}(d\xi)\sigma_{\partial D}(d\xi')$ where $\sigma_{\partial D}$ is the surface measure on $\partial D$,  namely $U_{\partial D}(d\xi d\xi')=U_{\partial D}(\xi,\xi')\sigma_{\partial D}(d\xi)\sigma_{\partial D}(d\xi')$,  and
\begin{equation}\label{eq:21}
U_{\partial D}(\xi,\xi')=\frac{1}{2}\frac{\partial P_D(\xi,\xi')}{\partial \mathbf{n}_\xi}, \quad \xi,\xi'\in \partial D,
\end{equation}
where $P_D$ is the Poisson kernel for $D$ and $\mathbf{n}_\xi$ denotes the inward normal unit vector at $\xi$. 

In this section,  we will figure out the Feller measure for the trace on the unbounded surface $\partial T$.  The result below indicates that  \eqref{eq:21} still holds for $T$. Note that the Poisson kernel for $T$ is
\begin{equation}\label{eq:C6}
P(x,\xi)=\frac{1}{2\pi}\frac{\sin x_2}{\cosh(x_1-\xi_1)-\cos x_2}1_{\{\dagger=0\}}+\frac{1}{2\pi}\frac{\sin x_2}{\cosh(x_1-\xi_1)+\cos x_2}1_{\{\dagger=\pi\}}
\end{equation}
for $x=(x_1,x_2)\in \mathring{T},  \xi=(\xi_1, \dagger)\in \partial T$;  see, e.g.,  \cite{W61}.  As a byproduct,  a probabilistic approach to formulate this Poisson kernel is presented in this proof. 

\begin{theorem}\label{THM1}
The Feller measure $U(d\xi d\xi')$ admits a density function $U(\xi, \xi')$ with respect to $\sigma(d\xi)\sigma(d\xi')$,  i.e.  $U(d\xi d\xi')=U(\xi,\xi')\sigma(d\xi)\sigma(d\xi')$,  and 
\begin{equation}\label{eq:4}
U(\xi,\xi')=\frac{1}{2}\frac{\partial P(\xi,\xi')}{\partial \mathbf{n}_\xi}, \quad \xi,\xi'\in \partial T,
\end{equation}
where $P$ is the Poisson kernel for $T$ given by \eqref{eq:C6}
and $\mathbf{n}_\xi$ denotes the inward normal unit vector at $\xi$.   More precisely,   for $\xi=(\xi_1,\dagger),  \xi'=(\xi'_1,\dagger')\in \partial T$,  
\begin{equation}\label{eq:5}
U(\xi,\xi')=\frac{1}{4\pi(\cosh(\xi_1-\xi_1')-1)}1_{\{\dagger=\dagger'\}}+\frac{1}{4\pi(\cosh(\xi_1-\xi_1')+1)}1_{\{\dagger\neq \dagger'\}}.
\end{equation}
\end{theorem}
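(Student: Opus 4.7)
My strategy is to compute $U_\alpha(\xi, \xi')$ explicitly by exploiting the product structure $B^T = (B^1, B^2)$, where $B^1$ is a standard Brownian motion on $\bR$ and $B^2$ is an independent standard Brownian motion on $[0,\pi]$ killed when it hits $\{0,\pi\}$, so that $\tau = \tau_2 := \inf\{t > 0 : B^2_t \in \{0,\pi\}\}$. Conditional on $\tau_2$, the displacement $B^1_{\tau_2} - x_1$ is centered Gaussian with variance $\tau_2$, so combining with the classical one-dimensional Laplace transforms
\[
\mathbf{E}_{x_2}\bigl[e^{-s\tau_2}; B^2_{\tau_2} = 0\bigr] = \frac{\sinh(\sqrt{2s}(\pi - x_2))}{\sinh(\sqrt{2s}\,\pi)},\qquad \mathbf{E}_{x_2}\bigl[e^{-s\tau_2}; B^2_{\tau_2} = \pi\bigr] = \frac{\sinh(\sqrt{2s}\, x_2)}{\sinh(\sqrt{2s}\,\pi)}
\]
produces the Fourier transform in $\xi_1$ of the $\alpha$-Poisson kernel $P^\alpha(x,\cdot)$ in closed form, with parameter $\gamma_\alpha(\mu) := \sqrt{2\alpha + \mu^2}$. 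Inverting this Fourier transform by residue calculus in the case $\alpha = 0$ recovers the formula \eqref{eq:C6} and yields the probabilistic derivation of the Poisson kernel promised as a byproduct.

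With these representations in hand, I substitute into $U_\alpha(\xi, \xi') = \alpha\int_{\mathring T} P^\alpha(x, \xi)\, P(x, \xi')\, dx$ and invoke Parseval in the $x_1$ variable (both factors are translation invariant in $x_1$) to convert the spatial integral into
\[
U_\alpha\bigl((\xi_1,\dagger), (\xi_1',\dagger')\bigr) = \frac{1}{2\pi}\int_\bR e^{i\mu(\xi_1 - \xi_1')}\, K_\alpha(\mu;\dagger,\dagger')\, d\mu,
\]
where $K_\alpha$ is an explicit integral of products of hyperbolic sines in $x_2$ over $(0,\pi)$. This one-dimensional integral can be evaluated in closed form via $2\sinh a\sinh b = \cosh(a+b) - \cosh(a-b)$, yielding a ratio of hyperbolic sines divided by $\gamma_\alpha(\mu) \pm |\mu|$.

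The main technical step is the asymptotic analysis as $\alpha\uparrow\infty$. A Laplace-type expansion of the $x_2$-integral, driven by $\gamma_\alpha(\mu)\to\infty$, will produce an asymptotic of the shape
\[
K_\alpha(\mu;\dagger,\dagger') = \sqrt{\alpha/2}\,\mathbf{1}_{\{\dagger = \dagger'\}} + L(\mu;\dagger,\dagger') + o(1),
\]
with $L$ explicit in $|\mu|$, $\coth(|\mu|\pi)$ and $1/\sinh(|\mu|\pi)$. The $\mu$-independent divergent piece Fourier-inverts to a distribution supported on the diagonal $d \subset \partial T\times\partial T$, hence contributes nothing to $U(\xi,\xi')$ for $\xi \neq \xi'$. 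Inverting the Fourier transform of $L$ via the series $1/\sinh^2(u/2) = 4\sum_{n\geq 1} n e^{-n|u|}$, its alternating-sign analogue for $1/(\cosh u + 1)$, and the classical partial-fraction identities for $\coth$ and $1/\sinh$, then produces the density \eqref{eq:5}. A direct differentiation of \eqref{eq:C6} at $x_2 = 0$ and $x_2 = \pi$ finally confirms the geometric form \eqref{eq:4}.

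The main obstacle is the non-compactness of $\partial T$: the Feller-measure arguments of \cite{AS82} and \cite{FHY04} deliver a bounded density immediately on a compact hypersurface, whereas here a genuine divergence $\sqrt{\alpha/2}$ appears as $\alpha \uparrow \infty$ and one must argue carefully that it is confined to the diagonal. Establishing this rigorously, uniformly over the positive Borel test pairs on $\partial T$ that define the Feller measure, and justifying the interchange of limit and Fourier integral away from Schwartz data, is the subtle bookkeeping step I expect to consume most of the proof.
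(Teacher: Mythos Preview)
Your approach is sound and should go through, but it takes a genuinely different route from the paper's proof. Both exploit the product structure $B^T=(\beta^1,\beta^2)$ with $\tau$ depending only on $\beta^2$; the divergence is that you pass immediately to the Fourier side in $x_1$ and the Laplace side in $t$ (so that the hyperbolic-sine formulas for $\mathbf{E}_{x_2}[e^{-s\tau};\beta^2_\tau=\dagger]$ become the basic building blocks), whereas the paper stays in the time domain throughout, writing $P_\alpha(x,\xi)=\int_0^\infty e^{-\alpha t}\,r_t(x_1,\xi_1)\,h(t,x_2,\dagger)\,dt$ with $r_t$ the Gaussian kernel and $h(t,x_2,\dagger)=\mathbf{P}_{x_2}(\tau\in dt,\beta^2_\tau=\dagger)/dt$ coming from the eigenfunction expansion of the absorbed heat kernel on $(0,\pi)$.

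The payoff of the paper's route is that the $\alpha\uparrow\infty$ limit is painless: after Chapman--Kolmogorov for $p^0$ and the elementary identity
\[
\alpha\int_0^\infty\!\!\int_0^\infty e^{-\alpha s}\,F(t+s)\,ds\,dt
=\int_0^\infty\bigl(1-e^{-\alpha u}\bigr)F(u)\,du,
\]
one gets $U_\alpha(\xi,\xi')=\tfrac14\int_0^\infty(1-e^{-\alpha t})\,r_t(\xi_1,\xi_1')\,\partial^2_{y_2 z_2}p^0_t\big|_{\dagger,\dagger'}\,dt$, which is \emph{monotone} in $\alpha$ and converges pointwise (for $\xi\neq\xi'$) with no divergent remainder to isolate. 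This entirely bypasses the $\sqrt{\alpha/2}$ diagonal term you have to extract and the attendant justification of Fourier inversion against non-Schwartz test data, which you rightly flag as the delicate part of your plan. Your Fourier--Laplace approach is arguably more systematic and connects directly with the Dirichlet-to-Neumann symbol $|\mu|\coth(|\mu|\pi)$, $|\mu|/\sinh(|\mu|\pi)$, which may be useful for generalizations; the paper's approach is cleaner for this particular computation because the monotone limit removes the bookkeeping you anticipate.
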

\begin{proof}
Let $B^T_t=(\beta^1_t, \beta^2_t)$,  where $\beta^1$ is a one-dimensional Brownian motion and $\beta^2$ is a reflecting Brownian motion on $[0,\pi]$ independent of $\beta^1$.  Then 
\[
	\tau=\{t>0: \beta^2_t\notin (0,\pi)\}
\]
is also independent of $\beta^1$.  In addition,  for any $\varphi\in b\mathcal{B}_+(\partial T)$ and $x=(x_1,x_2)\in \mathring{T}$, 
\begin{equation}\label{eq:C11}
	\bH^\alpha \varphi(x)=\mathbf{E}_x\left[e^{-\alpha \tau}\varphi_0(\beta^1_{\tau}), \beta^2_\tau=0\right]+\mathbf{E}_x\left[e^{-\alpha \tau}\varphi_\pi(\beta^1_{\tau}), \beta^2_\tau=\pi\right]
\end{equation}
and
 \[
	\bH \varphi(x)=\mathbf{E}_x\left[\varphi_0(\beta^1_{\tau}), \beta^2_\tau=0\right]+\mathbf{E}_x\left[\varphi_\pi(\beta^1_{\tau}), \beta^2_\tau=\pi\right].
\]
Let $\beta^{2,\tau}$ be the killed Brownian motion at time $\tau$, i.e. $\beta^{2,\tau}_t:=\beta^2_t$ for $t<\tau$ and $\beta^{2,\tau}_t:=\partial$,  the trap,  for $t\geq \tau$.  Denote the probability transition density of $\beta^{2,\tau}$ with respect to the Lebesgue measure by  $p^0_t(x_2,x'_2)$.   More precisely,  we have (see, e.g., \cite[Chapter 2,  Proposition~8.2]{PS78}) for $x_2,x'_2\in (0,\pi)$,  
\begin{equation}\label{eq:C23}
	p^0_t(x_2,x'_2)=\frac{2}{\pi}\sum_{n= 1}^\infty e^{-\frac{n^2 t}{2}} \sin(nx_2)\sin(nx'_2).
\end{equation}
Define
\begin{equation}\label{eq:C21}
	h(t,x_2,0):=\lim_{x'_2\downarrow 0}\frac{1}{2}\frac{\partial p^0_t(x_2,x'_2)}{\partial x'_2}=\frac{1}{\pi}\sum_{n= 1}^\infty ne^{-\frac{n^2 t}{2}} \sin(nx_2)
\end{equation}
and
\begin{equation}\label{eq:C22}
	h(t,x_2,\pi):=-\lim_{x'_2\uparrow \pi}\frac{1}{2}\frac{\partial p^0_t(x_2,x'_2)}{\partial x'_2}=\frac{1}{\pi}\sum_{n= 1}^\infty (-1)^{n+1} ne^{-\frac{n^2 t}{2}} \sin(nx_2).
\end{equation}
In what follows we will complete the proof in several steps.

Firstly,  we assert that for $x_2\in (0,\pi)$ and $\dagger=0,\pi$, 
\begin{equation}\label{eq:C2}
	\mathbf{P}_{x_2}(\tau\in dt, \beta^2_\tau =\dagger)=h(t,x_2, \dagger)dt. 
\end{equation}
To do this,  take a positive function $g$ defined on $\{0,\pi\}$ and set $u(x_2):=\mathbf{E}_{x_2}g(\beta^2_\tau)$ for $x_2\in (0,\pi)$, which is clearly harmonic in $(0,\pi)$.    Since $p^0$ satisfies $\partial p_t^0(x_2,x'_2)=\frac{1}{2}\Delta_{x'_2}p_t^0(x_2,x'_2)$ and $p^0_t(x_2,\dagger)=0$ for $\dagger=0,\pi$,   it follows from the strong Markovian property of $\beta^2$ that
\[
\begin{aligned}
\mathbf{E}_{x_2}\left[g(\beta^2_\tau),\tau\leq t\right]&=u(x_2)-\mathbf{E}_{x_2}u(\beta^{2,\tau}_t)  \\
&=-\int_0^t \frac{\partial}{\partial s}\left(\int_0^\pi p^0_s(x_2,x'_2)u(x'_2)dx'_2\right)ds \\
&=- \frac{1}{2}\int_0^t ds \int_0^\pi \Delta_{x'_2} p^0_s(x_2,x'_2)u(x'_2)dx'_2  \\
&=\int_0^t \left(g(\pi)h(s,x_2,\pi)+g(0)h(s,x_2,0) \right) ds.
\end{aligned}\]
Taking $g(0)=1, g(\pi)=0$ or $g(0)=0, g(\pi)=1$,  we can obtain \eqref{eq:C2}.  

Secondly,  $\bH^\alpha \varphi(x)$ and $\bH \varphi(x)$ admit the following expression: For $x=(x_1,x_2)\in \mathring{T}$ and $\xi=(\xi_1,\dagger)\in \partial T$, 
\begin{equation}\label{eq:C5}
	\bH^\alpha \varphi(x)=\int_{\partial T} P_\alpha(x,  \xi) \varphi(\xi)\sigma(d\xi), \quad 
	\bH \varphi(x)=\int_{\partial T} P(x, \xi) \varphi(\xi)\sigma(d\xi),
\end{equation}
where 
\begin{equation}\label{eq:C7}
P_\alpha(x,  \xi)=\int_0^\infty e^{-\alpha t} r_t(x_1,\xi_1)h(t,x_2,\dagger)dt,\quad P(x,  \xi)=\int_0^\infty  r_t(x_1,\xi_1)h(t,x_2,\dagger)dt,
\end{equation}
and $r_t(x_1,\xi_1)=\frac{1}{\sqrt{2\pi t}}e^{-\frac{(x_1-\xi_1)^2}{2t}}$.  Particularly,  $P$ is the Poisson kernel for $T$,  and a computation by means of \eqref{eq:C21} and \eqref{eq:C22} yields \eqref{eq:C6}. 
To prove \eqref{eq:C5},  it follows from \eqref{eq:C11} and \eqref{eq:C2} that 
\[
\begin{aligned}
	\bH^\alpha \varphi(x)&=\sum_{\dagger=0,\pi} \mathbf{E}_{x_2} \left[ e^{-\alpha \tau} \cdot \mathbf{E}_{x_1}\varphi_\dagger(\beta^1_\tau),  \beta^2_\tau=\dagger  \right]  \\
	&=\sum_{\dagger=0,\pi} \mathbf{E}_{x_2} \left[ e^{-\alpha \tau} \cdot \int_\bR r_\tau(x_1,\xi_1)\varphi_\dagger(\xi_1)d\xi_1,  \beta^2_\tau=\dagger  \right]  \\
	&=\sum_{\dagger=0,\pi} \int_0^\infty e^{-\alpha t}  \left(\int_\bR r_t(x_1,\xi_1)\varphi_\dagger(\xi_1)d\xi_1\right)h(t,x_2,\dagger)dt,
\end{aligned}\]
which implies the above expression of $\bH^\alpha \varphi$.  The expression of $\bH \varphi$ can be obtained analogously.

Finally,  we turn to prove \eqref{eq:4}.  To accomplish this,  take two positive bounded functions $\varphi,\phi$ on $\partial T$. It follows from \eqref{eq:C5} that $U_\alpha$ admits the following density with respect to $\sigma(d\xi)\sigma(d\xi')$: For $\xi=(\xi_1,\dagger)$ and $\xi'=(\xi'_1, \dagger')$ with $\dagger, \dagger'\in \{0,\pi\}$,  
\[
\begin{aligned}
	U_\alpha(\xi,\xi')&=\int_{\mathring{T}}\alpha P_\alpha(x,\xi)P(x,\xi')dx\\
	&=\frac{\alpha}{4}\int_0^\infty \int_0^\infty e^{-\alpha s} r_{t+s}(\xi_1,\xi'_1)dsdt\int_0^\pi \lim_{y_2\rightarrow \dagger, z_2\rightarrow \dagger'}\frac{\partial p^0_s(x_2,y_2)}{\partial y_2}\frac{\partial p^0_t(x_2,z_2)}{\partial z_2}dx_2.
\end{aligned}\]
Since $p^0_{t+s}(y_2,z_2)=\int_0^\pi p^0_s(x_2,y_2)p^0_t(x_2,z_2)dx_2$ and $p^0_t$ enjoys the expression  \eqref{eq:C23},  it follows from the dominated convergence theorem and the Fubini theorem that
\[
\begin{aligned}
	U_\alpha(\xi,\xi')&=\frac{\alpha}{4}\int_0^\infty  \int_0^\infty e^{-\alpha s} r_{t+s}(\xi_1,\xi'_1) \lim_{y_2\rightarrow \dagger, z_2\rightarrow \dagger'}\frac{\partial^2 p^0_{t+s}(y_2,z_2)}{\partial y_2 \partial z_2}dsdt  \\
	&=\frac{1}{4}\int_0^\infty (1-e^{-\alpha t}) r_t(\xi_1,\xi'_1) \lim_{y_2\rightarrow \dagger, z_2\rightarrow \dagger'}\frac{\partial^2 p^0_{t+s}(y_2,z_2)}{\partial y_2 \partial z_2}dt.
\end{aligned}\]
It is easy to verify that $t\mapsto r_t(\xi_1,\xi'_1) \lim_{y_2\rightarrow \dagger, z_2\rightarrow \dagger'}\frac{\partial^2 p^0_{t+s}(y_2,z_2)}{\partial y_2 \partial z_2}$ is integrable.  Hence,  using \cite[(5.5.14)]{CF12},  we can obtain that
\begin{equation}\label{eq:C112}
	U(\xi,\xi')=\lim_{\alpha\uparrow \infty}  U_\alpha(\xi,\xi')=\frac{1}{4}\int_0^\infty  r_t(\xi_1,\xi'_1) \lim_{y_2\rightarrow \dagger, z_2\rightarrow \dagger'}\frac{\partial^2 p^0_{t+s}(y_2,z_2)}{\partial y_2 \partial z_2}dt.
\end{equation}
Eventually we can obtain \eqref{eq:4} on account of \eqref{eq:C7} and \eqref{eq:C112}.  Particularly a straightforward computation yields \eqref{eq:5}.  That completes the proof. 
\end{proof}

\section{Characterization of trace Dirichlet form}

The purpose of this section is to derive the concrete expression of the trace Dirichlet form $(\check{\sE}, \check{\sF})$.  
The main result is as follows.

\begin{theorem}\label{THM2}
The trace Dirichlet form $(\check{\mathscr{E}},\check{\mathscr{F}})$ on $L^2(\partial T)$ enjoys the following expression:
\begin{equation}\label{eq:C1}
\begin{aligned}
\check{\mathscr{F}}&=H^{1/2}(\partial T):=\{f\in L^2(\partial T): f_0,f_\pi \in H^{1/2}(\bR)\},\\
\check{\mathscr{E}}(f,f)
& =\frac{1}{4\pi}\int_{\mathbb{R}\times\mathbb{R}}\frac{\left(f_0(x_1)-f_\pi(x_1')\right)^2}{\cosh(x_1-x_1')+1}dx_1dx_1'\\
&\qquad \qquad+\frac{1}{8\pi}\int_{\mathbb{R}\times\mathbb{R}}\frac{\left(f_0(x_1)-f_0(x_1')\right)^2+\left(f_\pi(x_1)-f_\pi(x_1')\right)^2}{\cosh(x_1-x_1')-1}dx_1dx_1',\quad f\in \check{\mathscr{F}}. 
\end{aligned}
\end{equation}
Furthermore,  the norms $\|\cdot\|_{\check{\mathscr{E}}_1}$ and $\|\cdot\|_{H^{1/2}(\partial T)}$ on $\check{\mathscr{F}}$ are equivalent.  
\end{theorem}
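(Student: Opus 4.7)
The plan is to derive \eqref{eq:C1} from the Beurling-Deny decomposition of the trace Dirichlet form combined with the explicit Feller measure of Theorem~\ref{THM1}. By the Chen-Fukushima-Ying characterisation (see \cite[Theorem~5.5.9]{CF12} or the analogous results in \cite{CFY06} and \cite{FHY04}), the trace form admits the representation
\[
  \check{\sE}(f,f) = \check{\sE}^{(c)}(f,f) + \frac{1}{2}\iint_{\partial T\times \partial T\setminus d}(f(\xi)-f(\xi'))^2\,U(d\xi\,d\xi') + \int_{\partial T} f(\xi)^2\,V(d\xi),
\]
where $V$ is the supplementary Feller measure. Since $(\sE,\sF)$ is recurrent on $T$, one has $\bH 1 \equiv 1$ on $\mathring{T}$, which forces $V \equiv 0$. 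The strongly local part likewise vanishes: $\partial T$ is a Lebesgue null set in $T$, so $B^T$ spends zero time on it, and the energy measure of any $u \in \sF_e$ puts no mass on $\partial T$; by the Chen-Fukushima formula for the local part of a trace this yields $\check{\sE}^{(c)} \equiv 0$. In other words, $\check{B}^T$ is a pure jump process.

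Next I substitute the density \eqref{eq:5} into the jumping integral and split the integration region according to $(\dagger, \dagger') \in \{0, \pi\}^2$. The two diagonal cases with $\dagger = \dagger' \in \{0, \pi\}$ each contribute $\frac{1}{8\pi}\iint \frac{(f_\dagger(x_1) - f_\dagger(x_1'))^2}{\cosh(x_1-x_1')-1}\,dx_1 dx_1'$ after applying the prefactor $\frac{1}{2}$. The two off-diagonal cases are identified by swapping $x_1 \leftrightarrow x_1'$ and using that $\cosh$ is even, so their sum doubles and exactly absorbs the prefactor $\frac{1}{2}$, producing $\frac{1}{4\pi}\iint \frac{(f_0(x_1) - f_\pi(x_1'))^2}{\cosh(x_1-x_1')+1}\,dx_1 dx_1'$. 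Summing these four contributions reproduces the right-hand side of \eqref{eq:C1}, and the identification $\check{\sF} = H^{1/2}(\partial T)$ will then follow from the norm equivalence below, since $\check{\sF}$ is defined as the form domain.

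For the norm equivalence $\|\cdot\|_{\check{\sE}_1} \asymp \|\cdot\|_{H^{1/2}(\partial T)}$, I would exploit the asymptotics $\cosh t - 1 = t^2/2 + O(t^4)$ near $t = 0$ and $\cosh t \pm 1 \sim \frac{1}{2}e^{|t|}$ at infinity. The former shows that on $\{|x_1 - x_1'| \leq 1\}$ the diagonal kernel $(\cosh(x_1 - x_1') - 1)^{-1}$ is pointwise comparable to $2/(x_1-x_1')^2$, the standard $H^{1/2}(\bR)$ Gagliardo kernel; on $\{|x_1-x_1'|>1\}$ the exponential decay makes this kernel bounded, so by Fubini its contribution is controlled by $\|f_\dagger\|_{L^2}^2$. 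The same reasoning applies to the cross kernel $(\cosh+1)^{-1}$, which is uniformly bounded and $L^1$ in each variable, so the full cross term is dominated by $\|f_0\|_{L^2}^2 + \|f_\pi\|_{L^2}^2$. The reverse inequality is obtained analogously by splitting the $H^{1/2}(\bR)$ seminorm at $|x_1-x_1'|=1$ and absorbing the far-field piece into $L^2$ norms. The main obstacle is the rigorous verification of $\check{\sE}^{(c)} \equiv 0$ via the energy-measure formula; once that is in hand, the remainder reduces to a direct computation.
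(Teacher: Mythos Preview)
Your derivation of the expression for $\check{\sE}$ is essentially the same as the paper's: invoke the Beurling--Deny decomposition of the trace form, kill the killing part by recurrence, observe that the strongly local part $\tfrac12\mu^c_{\langle \bH f\rangle}(\partial T)$ vanishes because $\mu^c_{\langle \bH f\rangle}(dx)=|\nabla(\bH f)|^2dx$ puts no mass on the Lebesgue-null set $\partial T$, and then substitute the Feller density~\eqref{eq:5}. (So what you flag as ``the main obstacle'' is in fact a one-line observation; the paper dispatches it exactly as you suggest.)

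Where you diverge from the paper is in the identification $\check{\sF}=H^{1/2}(\partial T)$ and the norm equivalence. You argue by direct pointwise comparison of the kernels $(\cosh t\pm 1)^{-1}$ with the Gagliardo kernel $t^{-2}$, splitting near/far field. This is correct and more elementary than the paper's route: the paper proves $\check{\sF}\subset H^{1/2}(\partial T)$ by showing that the harmonic extension $\bH\varphi$ lies in $H^1(T)$ (via a Young-inequality estimate $\|\bH\varphi\|_{L^2(T)}^2\le\pi\|\varphi\|_{L^2(\partial T)}^2$) and then applying the Sobolev trace operator $\gamma^T:H^1(T)\to H^{1/2}(\partial T)$, together with a quasi-continuity argument to identify $\gamma^T(\bH\varphi)$ with $\varphi$. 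Your approach avoids all of this machinery; the paper's approach, on the other hand, yields as a byproduct the unitary equivalence $\gamma^T\leftrightarrow\bH$ between $(\sE,\sF)$ and $(\check{\sE},\check{\sF})$ recorded in the remark after the proof.

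One small gap to close: your kernel comparison shows that the quadratic form $Q$ on the right of \eqref{eq:C1} satisfies $Q_1\asymp\|\cdot\|_{H^{1/2}(\partial T)}^2$ on all of $L^2(\partial T)$, and that $\check{\sE}=Q$ on $\check{\sF}$. But $\check{\sF}$ is \emph{defined} as $\sF_e|_{\partial T}\cap L^2(\partial T)$, not a priori as $\{f:Q(f,f)<\infty\}$; you still need to rule out $\check{\sF}\subsetneq H^{1/2}(\partial T)$. The paper handles this by invoking the explicit description of the extended trace space (\cite[Example~7.2.10]{CF12}) and a Harnack argument. A quicker fix in your framework: note $C_c^\infty(\partial T)\subset\check{\sF}$ (extend smoothly into $T$), so $\check{\sF}$ is a closed subspace of $H^{1/2}(\partial T)$ containing a dense set, hence all of it.
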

\begin{proof}
Firstly, note that $B^T$ is recurrent due to \cite[Theorem~2.2.13]{CF12}.  Then it follows from \cite[Theorem~5.2.5]{CF12} that $\check{B}^T$ is recurrent.  Particularly,  $\check{B}^T$ has no killing inside.  Furthermore, by means of \cite[Corollary~5.6.1]{CF12}), we have that for $f\in \check{\mathscr{F}}$,
\[
	\check{\mathscr{E}}(f,f)=\frac{1}{2}\mu^c_{\langle \bH f, \bH f\rangle}(\partial T) +\frac{1}{2}\int_{\partial T\times \partial T} \left(f(\xi)-f(\xi')\right)^2 U(d\xi d\xi'),
\]
where $\mu^c_{\langle \bH f, \bH f\rangle}$ is the smooth measure expressing the strongly local part of $\mathscr{E}(\bH f, \bH f)$ and $U(d\xi d\xi')$ is the Feller measure formulated in Theorem~\ref{THM1}.  It is easy to verify that $\mu^c_{\langle \bH f, \bH f\rangle}(dx)=|\nabla (\bH f)|^2(x)dx$ and hence $\mu^c_{\langle \bH f, \bH f\rangle}(\partial T)=0$.  
As a consequence of \eqref{eq:5},  $\check{\mathscr{E}}$ enjoys the expression in \eqref{eq:C1}.

Secondly,  we show 
$$\check{\mathscr{F}}=\{f\in L^2(\partial T): \check{\mathscr{E}}(f,f)<\infty\}.$$ 
In fact,  $\check{\mathscr{F}}=\check{\mathscr{F}}_e\cap L^2(\partial T)$ and \cite[Example~7.2.10]{CF12} tells us that 
\begin{equation}\label{eq:C12}
	\check{\mathscr{F}}_e=\{f\text{ on }\partial T: \bH(f^2)<\infty,  \text{ a.e.  on }\mathring{T}\text{ and } \check{\mathscr{E}}(f,f)<\infty\}.  
\end{equation}
It suffices to show $\bH(f^2)<\infty$ a.e. on $\mathring{T}$ for $f\in L^2(\partial T)$ with $\check{\mathscr{E}}(f,f)<\infty$.  Note that $\bH (f^2)$ is harmonic on $\mathring{T}$ due to \cite[Theorem~1]{W61}.  On the other hand, since $\cosh \xi_1\geq 1$, 
\begin{equation}\label{eq:C13}
	\bH(f^2)(0,\pi/2)=\frac{1}{2\pi}\sum_{\dagger=0,\pi}\int_\bR \frac{f^2_\dagger(\xi_1)}{\cosh \xi_1} d\xi_1\leq \frac{1}{2\pi}\int_{\partial T} f^2d\sigma<\infty .  
\end{equation}
Therefore $\bH(f^2)<\infty$ on $\mathring{T}$ by virtue of the Harnack inequality.  

Thirdly,  denote the trace operator from $H^1(T)$ to $H^{1/2}(\partial T)$ by $\gamma^T$,  i.e. 
\begin{equation}\label{eq:gamma}
\gamma^T: H^1(T)\rightarrow H^{1/2}(\partial T)
\end{equation}
is a bounded linear operator such that $\gamma^T u=u|_{\partial T}$ for any $u\in C_c^\infty(T)$.  Note that $\cosh\xi_1-1\geq \xi_1^2/2$ and $\xi_1\mapsto \frac{1}{\cosh \xi_1 +1}$ is integrable.  These imply that 
\begin{equation}\label{eq:C132}
	H^{1/2}(\partial T)\subset \check{\mathscr{F}},\quad \check{\mathscr{E}}_1(\varphi,\varphi)\lesssim \|\varphi\|^2_{H^{1/2}(\partial T)},\quad \forall \varphi\in H^{1/2}(\partial T).
\end{equation}
We assert that for any $f\in H^1(T)$,  it holds that
\begin{equation}\label{eq:C14}
	\tilde{f}|_{\partial T}=\widetilde{\gamma^T f},\quad \check{\mathscr{E}}\text{-q.e.},
\end{equation}
where $\tilde{f}$ is the $\mathscr{E}$-quasi-continuous version of $f$ and $\widetilde{\gamma_T f}$ is the $\check{\mathscr{E}}$-quasi-continuous version of $\gamma_T f\in H^{1/2}(\partial T)\subset \check{\mathscr{F}}$.  To accomplish this,  take $f_n\in C_c^\infty(T)$ such that $\mathscr{E}_1(f_n-f,f_n-f)\rightarrow 0$.  Then we have,  taking a subsequence of $\{f_n\}$ if necessary,  $f_n$ converges to $\tilde{f}$,  $\mathscr{E}$-q.e.  This implies $f_n|_{\partial T}$ converges to $\tilde{f}|_{\partial T}$, $\mathscr{E}$-q.e.  It follows from \cite[Theorem~5.2.8]{CF12} that $f_n|_{\partial T}$ converges to $\tilde{f}|_{\partial T}$,  $\check{\mathscr{E}}$-q.e.  On the other hand,  $\gamma^T f_n\rightarrow \gamma^T f$ in $H^{1/2}(\partial T)$.   Hence \eqref{eq:C132} indicates that, taking a subsequence of $\{f_n\}$ if necessary, $f_n|_{\partial T}=\gamma^T f_n$ converges to $\widetilde{\gamma^T f}$,  $\check{\mathscr{E}}$-q.e.  Therefore we can conclude \eqref{eq:C14}. 

Fourthly,  we assert that 
\begin{equation}\label{eq:C15}
f(x):=\mathbf{H} \varphi(x) =\int_{\partial T} P(x,\xi)\varphi(\xi)\sigma(d\xi)\in \mathscr{F},\quad \forall \varphi \in \check{\mathscr{F}},
\end{equation}
 where $P(x,\xi)$ is the Poisson kernel \eqref{eq:C6} for $T$.  Clearly $f\in \mathscr{F}_e$, and it suffices to show $f\in L^2(T)$. Indeed, for  $x=(x_1,x_2)\in T$, 
 \[
 	f(x_1,x_2)=\sum_{\dagger=0, \pi}  P^\dagger_{x_2}\ast \varphi_\dagger(x_1),
 \]
 where $P^0_{x_2}(\cdot)=\frac{1}{2\pi} \frac{\sin x_2}{\cosh(\cdot)-\cos x_2}$ and $P^\pi_{x_2}(\cdot)=\frac{1}{2\pi} \frac{\sin x_2}{\cosh(\cdot)+\cos x_2}$.  Note that 
 \[
 	\|P^0_{x_2}\|_{L^1(\bR)}=1-x_2/\pi,\quad \|P^0_{x_2}\|_{L^1(\bR)}=x_2/\pi,
 \]
 due to \cite[Lemma~1]{W61}.  It follows from the Young inequality for convolutions that 
 \[
 	\|f(\cdot, x_2)\|_{L^2(\bR)}^2 = \sum_{\dagger=0,\pi} \|P^\dagger_{x_2}\ast \varphi_\dagger\|_{L^2(\bR)}^2\leq \sum_{\dagger=0,\pi} \|P^\dagger_{x_2}\|_{L^1(\bR)}^2 \|\varphi_\dagger\|_{L^2(\bR)}^2 \leq \|\varphi\|_{L^2(\partial T)}^2.  
 \]
Hence we can conclude that 
\begin{equation}\label{eq:C16}
\|f\|_{L^2(T)}^2=\int_0^\pi \|f(\cdot, x_2)\|_{L^2(\bR)}^2dx_2\leq \pi \|\varphi\|_{L^2(\partial T)}^2.
\end{equation}

Finally,  we prove that 
\begin{equation}\label{eq:C18}
	\check{\mathscr{F}}\subset H^{1/2}(\partial T),\quad \|\varphi\|^2_{H^{1/2}(\partial T)}\lesssim \check{\mathscr{E}}_1(\varphi,\varphi),\quad \forall \varphi\in \check{\mathscr{F}}.
\end{equation}
To do this,  take $\varphi\in \check{\mathscr{F}}$ and let $f:=\bH \varphi$.  On account of \eqref{eq:C15},  we know that $f\in \mathscr{F}=H^1(T)$.  Then it follows from \eqref{eq:C14} that 
\[
\varphi=f|_{\partial T}=\gamma^T f\in H^{1/2}(\partial T). 
\]
In addition,  $\|\varphi\|^2_{H^{1/2}(\partial T)}=\|\gamma^T f\|^2_{H^{1/2}(\partial T)}\lesssim \mathscr{E}_1(f,f)\leq \check{\mathscr{E}}(\varphi,\varphi)+\pi\cdot \|\varphi\|^2_{L^2(\partial T)}$ due to \eqref{eq:C16}.
That completes the proof. 
\end{proof}
\begin{remark}
The existence of the trace operator $\gamma^T$ in \eqref{eq:gamma} is due to,  e.g.,  \cite{JW78}. 
From this proof,  we can conclude that the unitary equivalence 
\begin{equation}\label{eq:C17}
	(\mathscr{E},\mathscr{F}) \xleftrightharpoons[\gamma^T=\cdot|_{\partial T}]{\mathbf{H}}(\check{\mathscr{E}},\check{\mathscr{F}})
 \end{equation}
 holds.  More specifically,  $\gamma^T(\mathscr{F})=\check{\mathscr{F}}$ and $\mathscr{E}(\mathbf{H}\varphi, \bH \psi)=\check{\mathscr{E}}(\varphi,\psi)$ for $\varphi, \psi\in \check{\mathscr{F}}$.  We should emphasize that $\gamma^T=\cdot|_{\partial T}$ holds in the sense of \eqref{eq:C14}.
 \end{remark}
 
 The expression of $\check{\sE}$ shows that the associated Markov process $\check{B}^T$ enjoys two kinds of jump: The first kind, characterized by the first term in $\check{\sE}$,  takes place between the two components of $\partial T$,  and the second kind,  characterized by the second term in $\check{\sE}$,  takes place on each component of $\partial T$.  Thanks to the first kind of jump,  $\check{B}^T$ or $(\check{\sE},\check{\sF})$ is irreducible.  
 
 \begin{corollary}
 The trace Dirichlet form $(\check{\sE},\check{\sF})$ is irreducible and recurrent.  
 \end{corollary}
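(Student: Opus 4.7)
The plan is as follows. Recurrence comes for free from what has already been recorded at the start of the proof of Theorem~\ref{THM2}: $B^T$ is recurrent by \cite[Theorem~2.2.13]{CF12}, and \cite[Theorem~5.2.5]{CF12} then transfers recurrence to the time-changed process $\check{B}^T$, hence to $(\check{\sE},\check{\sF})$. Only irreducibility requires work.

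For irreducibility, the strategy is to use the standard characterization that a Borel set $A\subset \partial T$ is invariant for $(\check{\sE},\check{\sF})$ precisely when $\check{\sE}(1_A f, 1_{A^c} f)=0$ for every $f\in \check{\sF}$, and to show that any such $A$ satisfies $\sigma(A)=0$ or $\sigma(A^c)=0$. The crucial input is that $\check{\sE}$ is of pure-jump type; indeed the proof of Theorem~\ref{THM2} established $\mu^c_{\langle \bH f,\bH f\rangle}(\partial T)=0$. Expanding $\check{\sE}(1_A f, 1_{A^c}f)$ against the symmetric jumping measure $U(d\xi\, d\xi')$ then translates invariance of $A$ into the single requirement that $U$ places no mass on $A\times A^c$.

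The next step is to introduce $A_0:=\{x\in \bR:(x,0)\in A\}$ and $A_\pi:=\{x\in \bR:(x,\pi)\in A\}$, and to split $A\times A^c$ into its four pieces according to the two boundary components of $\partial T$. Invoking the explicit density \eqref{eq:5} and the strict positivity a.e.\ on $\bR\times\bR$ of both kernels $(\cosh(x_1-x_1')\mp 1)^{-1}$, the condition $U(A\times A^c)=0$ reduces to the four Lebesgue-measure identities
\begin{equation*}
|A_0|\cdot |A_0^c|=0,\quad |A_\pi|\cdot |A_\pi^c|=0,\quad |A_0|\cdot|A_\pi^c|=0,\quad |A_\pi|\cdot|A_0^c|=0.
\end{equation*}
A short case analysis then finishes: the first identity yields $|A_0|=0$ or $|A_0^c|=0$; in the former case the fourth forces $|A_\pi|=0$, so $\sigma(A)=0$, while in the latter the third forces $|A_\pi^c|=0$, so $\sigma(A^c)=0$.

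No serious obstacle is anticipated. The whole argument turns on the pure-jump decomposition from the proof of Theorem~\ref{THM2} together with the strict positivity of the two kernels in \eqref{eq:5}. The decisive ingredient is the cross-component kernel $(\cosh(x_1-x_1')+1)^{-1}$, which is strictly positive on all of $\bR\times\bR$ and therefore prevents any invariant set from living entirely in one of the two boundary components.
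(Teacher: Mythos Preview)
Your argument is correct, but it differs from the paper's route. The paper handles irreducibility in one line via \cite[Theorem~5.2.6]{CF12}: it takes $f\in\check{\sF}_e$ with $\check{\sE}(f,f)=0$ and observes from the explicit expression \eqref{eq:C1} that $f$ must be constant (the second term forces $f_0,f_\pi$ each constant, and the first term forces those constants to coincide). Your approach works instead with invariant sets and the pure-jump Beurling--Deny decomposition, reducing irreducibility to the vanishing of $U(A\times A^c)$ and then exploiting the strict positivity of both kernels in \eqref{eq:5}. Both arguments ultimately rest on the same fact---positivity of the Feller kernel, especially the cross-boundary piece---but the paper's functional criterion is shorter and avoids the bookkeeping of splitting $A$ along the two boundary components. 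Your route has the mild advantage of making explicit which kernel enforces communication between the two lines, a point the paper only mentions informally after the corollary.
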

 \begin{proof}
 The recurrence of $(\check{\sE},\check{\sF})$ has been obtained in the proof of Theorem~\ref{THM2}.  To prove the irreducibility,  take $f\in \check{\sF}_e$ with $\check{\sE}(f,f)=0$.  It is easy to verify that $f$ must be constant.  Hence by virtue of \cite[Theorem~5.2.6]{CF12},  we can conclude the irreducibility of $(\check{\sE},\check{\sF})$.  That completes the proof. 
 \end{proof}

\section{Convergence of trace Dirichlet forms}

Now we turn to consider the strip $\Omega_\ell:=\bR\times (-\frac{\pi}{2}\ell,  \frac{\pi}{2}\ell)$ for a constant $\ell>0$.  Let $B^\ell$ be the reflecting Brownian motion on $\bar{\Omega}_\ell$ associated with the regular Dirichlet form on $L^2(\bar{\Omega}_\ell)$:
\[
\begin{aligned}
  \mathscr{F}^\ell &=H^1(\bar\Omega_\ell),\\
 \mathscr{E}^\ell (u,u)&=
 \frac{1}{2}\int_{\bar\Omega_\ell} |\nabla u|^2dx,\quad u\in \mathscr{F}^\ell.
 \end{aligned}
\]
After a spatial translation and scaling on $(\mathscr{E}, \mathscr{F})$, it is easy to obtain the trace Dirichlet form  of $(\mathscr{E}^\ell, \mathscr{F}^\ell)$ on the boundary $\partial \bar\Omega_\ell$ as follows.  

\begin{corollary}
Let $\sigma_\ell$ be the Lebesgue measure on $\partial \bar \Omega_\ell$.  Then the trace Dirichlet form of  $(\mathscr{E}^\ell, \mathscr{F}^\ell)$  on $L^2(\partial \bar{\Omega}_\ell):=L^2(\partial \bar\Omega_\ell, \sigma_\ell)$ is
\begin{equation}\label{TRACEDF}
\begin{split}
\check{\mathscr{F}}^\ell &=H^{1/2}(\partial \bar\Omega_\ell):=\{f\in L^2(\partial \bar{\Omega}_\ell): f_{\pm \ell}\in H^{1/2}(\bR)\},\\
\check{\mathscr{E}}^\ell(f,f)
& =\frac{1}{4\pi}\int_{\mathbb{R}\times\mathbb{R}}\frac{\left(f_\ell(x_1)-f_{-\ell}(x_1')\right)^2}{\ell^2\left(\cosh\left(\ell^{-1}(x_1-x_1')\right)+1\right)}dx_1dx_1'\\
&\qquad \qquad+\frac{1}{8\pi}\int_{\mathbb{R}\times\mathbb{R}}\frac{\left(f_\ell(x_1)-f_\ell(x_1')\right)^2+\left(f_{-\ell}(x_1)-f_{-\ell}(x_1')\right)^2}{\ell^2\left(\cosh\left(\ell^{-1}(x_1-x_1')\right)-1\right)}dx_1dx_1',\quad f\in \check{\mathscr{F}}^\ell, 
\end{split}
\end{equation}
where $f_{\ell}(\cdot):=f(\cdot,\ell)$ and $f_{-\ell}(\cdot ):=f(\cdot,-\ell)$.  Furthermore,  the norms $\|\cdot\|_{\check{\mathscr{E}}^\ell_1}$ and $\|\cdot\|_{H^{1/2}(\partial \bar\Omega_\ell)}$ on $\check{\mathscr{F}}^\ell$ are equivalent.  
\end{corollary}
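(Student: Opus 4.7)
The plan is to reduce the statement to Theorem~\ref{THM2} by means of the affine similarity
\[
\Phi:T\to\bar\Omega_\ell,\qquad \Phi(y_1,y_2)=\bigl(\ell y_1,\,\ell(y_2-\pi/2)\bigr),
\]
which maps $\{y_2=0\}$ onto $\{x_2=-\pi\ell/2\}$ and $\{y_2=\pi\}$ onto $\{x_2=\pi\ell/2\}$. Define the pullbacks $Ju:=u\circ\Phi$ on $T$ and $J_\partial f:=f\circ\Phi|_{\partial T}$ on $\partial T$. Since $D\Phi=\ell I_2$, the chain rule and a change of variables give $|\nabla_y Ju|^2=\ell^2(|\nabla u|^2\circ\Phi)$ and $dy=\ell^{-2}dx$, so
\[
\mathscr{E}(Ju,Ju)=\mathscr{E}^\ell(u,u),\qquad \|Ju\|_{L^2(T)}^2=\ell^{-2}\|u\|_{L^2(\bar\Omega_\ell)}^2,\qquad \|J_\partial f\|_{L^2(\partial T)}^2=\ell^{-1}\|f\|_{L^2(\partial\bar\Omega_\ell)}^2.
\]
In particular $J$ restricts to a linear isomorphism $H^1(\bar\Omega_\ell)\to H^1(T)$, i.e.\ $J(\mathscr{F}^\ell)=\mathscr{F}$.

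Next I would verify that $J$ intertwines the Poisson extensions, $\bH(J_\partial\varphi)=J(\bH^\ell\varphi)$, where $\bH^\ell$ denotes the harmonic extension on $\bar\Omega_\ell$. This is immediate from the 2-dimensional conformal invariance of harmonicity (alternatively, from $\Phi(B^T_t)\stackrel{d}{=}B^\ell_{\ell^2 t}$ started at matching points, so the hitting distributions on $\partial T$ and $\partial\bar\Omega_\ell$ correspond). Combined with the energy identity above this yields
\[
\check{\mathscr{E}}^\ell(\varphi,\varphi)=\mathscr{E}^\ell(\bH^\ell\varphi,\bH^\ell\varphi)=\mathscr{E}(\bH J_\partial\varphi,\bH J_\partial\varphi)=\check{\mathscr{E}}(J_\partial\varphi,J_\partial\varphi).
\]
Noting that $(J_\partial\varphi)_0(y_1)=\varphi_{-\ell}(\ell y_1)$ and $(J_\partial\varphi)_\pi(y_1)=\varphi_\ell(\ell y_1)$, I would substitute into the formula of Theorem~\ref{THM2} and change variables $x_1=\ell y_1$, $x_1'=\ell y_1'$; the extra factor $\ell^{-2}$ coming from $dy_1 dy_1'=\ell^{-2}dx_1 dx_1'$ is exactly what produces the $\ell^2(\cosh(\ell^{-1}(x_1-x_1'))\pm 1)$ denominators in \eqref{TRACEDF} (the symmetry $(a-b)^2=(b-a)^2$ handles the swap of the roles of $\varphi_\ell$ and $\varphi_{-\ell}$ in the cross-boundary term).

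For the domain, Theorem~\ref{THM2} gives $J_\partial\varphi\in\check{\mathscr{F}}$ iff $(J_\partial\varphi)_0,(J_\partial\varphi)_\pi\in H^{1/2}(\bR)$, and since $H^{1/2}(\bR)$ is invariant under the dilation $g\mapsto g(\ell\,\cdot)$ (the Gagliardo seminorm and the $L^2$-norm each pick up only a power of $\ell$), this is equivalent to $\varphi_{\pm\ell}\in H^{1/2}(\bR)$; hence $\check{\mathscr{F}}^\ell=H^{1/2}(\partial\bar\Omega_\ell)$. The norm equivalence $\|\cdot\|_{\check{\mathscr{E}}^\ell_1}\asymp\|\cdot\|_{H^{1/2}(\partial\bar\Omega_\ell)}$ is inherited from the corresponding equivalence in Theorem~\ref{THM2} under the pullback $J_\partial$, with constants picking up explicit powers of $\ell$ that I would track but not belabor. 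No serious obstacle arises here; the only point requiring care is keeping the Jacobian factors consistent across area elements, line elements, and the rescaling of the $H^{1/2}$-seminorm.
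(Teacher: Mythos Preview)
Your proposal is correct and follows essentially the same approach as the paper: the paper simply states that the result is obtained ``after a spatial translation and scaling on $(\mathscr{E},\mathscr{F})$'' without further detail, and you have carried out precisely that computation via the affine map $\Phi$. The only remark is that the paper gives no proof beyond that one line, so your explicit tracking of the Jacobian factors, the intertwining $\bH(J_\partial\varphi)=J(\bH^\ell\varphi)$, and the dilation invariance of $H^{1/2}(\bR)$ in fact supplies the argument the paper leaves implicit.
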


The main purpose of this section is to figure out the limit of  \eqref{TRACEDF} as $\ell\rightarrow \ell_0\in [0,\infty]$.  Note that $\partial \bar{\Omega}_\ell=\bR\times \{\ell, -\ell\}$ is isomorphic to $\partial T:=\bR\times \{0,\pi\}$ under a certain map $j$.  Hence,  the image Dirichlet form of $(\check{\mathscr{E}}^\ell, \check{\mathscr{F}}^\ell)$ under $j$ is 
\[
\begin{aligned}
	\check{\mathscr{G}}^\ell &=H^{1/2}(\partial T),   \\
	\check{\mathscr{A}}^\ell(f,f)&=\frac{1}{2\pi}\check{\mathscr{A}}^{\ell,1}(f,f)+\frac{1}{8\pi }\check{\sA}^{\ell,2}(f,f),\quad f\in \check{\mathscr{G}}^\ell,
\end{aligned} 
\]
where $\partial T^+:=\bR\times \{\pi\},  \partial T^-:=\bR\times \{0\}$, $f^+(\cdot):=f(\cdot, \pi)$, $f^-(\cdot):=f(\cdot, 0)$,  and
\[
\begin{aligned}
&\check{\mathscr{A}}^{\ell,1}(f,f):=\int_{\mathbb{R}\times\mathbb{R}}\frac{\left(f^+(x_1)-f^-(x_1')\right)^2}{2\ell^2\left(\cosh\left(\ell^{-1}(x_1-x_1')\right)+1\right)}dx_1dx_1', \\
&\check{\sA}^{\ell,2}(f,f):=\int_{\mathbb{R}\times\mathbb{R}}\frac{\left(f^+(x_1)-f^+(x_1')\right)^2+\left(f^-(x_1)-f^-(x_1')\right)^2}{\ell^2\left(\cosh\left(\ell^{-1}(x_1-x_1')\right)-1\right)}dx_1dx_1'.
\end{aligned}
\]
Clearly,  $(\check{\sA}^\ell, \check{\sG}^\ell)$ is a regular Dirichlet form on $L^2(\partial T)$.  
The following lemma states some crucial facts about the two terms $\check{\sA}^{\ell,1}$ and $\check{\sA}^{\ell,2}$.  

\begin{lemma}\label{LMD1}
\begin{itemize}
\item[(1)] For any $f\in L^2(\partial T)$,  it holds that $\check{\mathscr{A}}^{\ell,1}(f,f)\leq \frac{2}{\ell}\|f\|^2_{L^2(\partial T)}$. 
\item[(2)] For any $f\in H^{1/2}(\partial T)$,  
\[
	\ell \rightarrow \check{\mathscr{A}}^{\ell,2}(f,f)
\]
is increasing.  Furthermore,  $\|\cdot\|_{{\check{\sA}^{\ell,2}}_1}$ is an equivalent norm to $\|\cdot\|_{H^{1/2}(\partial T)}$ on $H^{1/2}(\partial T)$.  Particularly,  $(\check{\sA}^{\ell,2}, \check{\mathscr{G}}^\ell)$ is a regular Dirichlet form on $L^2(\partial T)$.   
\end{itemize}
\end{lemma}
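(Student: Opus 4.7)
\bigskip
\noindent\textbf{Proof proposal for Lemma~\ref{LMD1}.}

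For part (1), the plan is a direct two--line computation. First apply the elementary inequality $(a-b)^2 \leq 2(a^2+b^2)$ to the numerator $(f^+(x_1)-f^-(x_1'))^2$ to split the double integral into two pieces, one depending only on $f^+(x_1)^2$ and the other only on $f^-(x_1')^2$. For each piece, one of the variables can be integrated out: using the identity $\cosh u + 1 = 2\cosh^2(u/2)$ and substituting $u = \ell^{-1}(x_1-x_1')$, one finds
\[
\int_\bR \frac{dx_1'}{\ell^2 \bigl(\cosh(\ell^{-1}(x_1-x_1'))+1\bigr)} \;=\; \frac{1}{\ell}\int_\bR \frac{du}{2\cosh^2(u/2)} \;=\; \frac{2}{\ell}.
\]
Summing the contributions from $f^+$ and $f^-$ gives the claimed bound $\frac{2}{\ell}\|f\|_{L^2(\partial T)}^2$.

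For the monotonicity in part (2), write the kernel as $K_\ell(r) = [2\ell^2 \sinh^2(r/(2\ell))]^{-1}$ via $\cosh y - 1 = 2\sinh^2(y/2)$. Fix $r \neq 0$ and study $g(\ell) := \ell^2 \sinh^2(r/(2\ell))$. A direct differentiation, with $u := r/(2\ell)$, yields
\[
g'(\ell) \;=\; 2\ell \sinh(u)\bigl[\sinh(u) - u\cosh(u)\bigr].
\]
The inequality $\sinh u \leq u \cosh u$ for $u \geq 0$ (which follows by observing that $h(u):= \sinh u - u\cosh u$ satisfies $h(0)=0$ and $h'(u)=-u\sinh u \leq 0$), together with the obvious sign behaviour at $u \leq 0$, forces $g'(\ell)\leq 0$. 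Hence $K_\ell(r)$ is increasing in $\ell$ for each fixed $r\neq 0$, which gives monotonicity of $\ell \mapsto \check{\mathscr{A}}^{\ell,2}(f,f)$.

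For the norm equivalence I would compare $K_\ell(r)$ pointwise with the standard $H^{1/2}(\bR)$ kernel $|r|^{-2}$. Setting $s = r/(2\ell)$, one has $K_\ell(r)/|r|^{-2} = 2s^2/\sinh^2 s$, a bounded even function that tends to $2$ at $0$ and decays exponentially at infinity. The upper bound $K_\ell(r) \leq 2/|r|^2$ (from $\sinh s \geq s$ for $s\geq 0$) immediately yields $\check{\mathscr{A}}^{\ell,2}(f,f) \lesssim \|f\|_{H^{1/2}(\partial T)}^2$. For the reverse bound I would split the $H^{1/2}$ seminorm according to $|x_1-x_1'|\leq 2\ell$ and $|x_1-x_1'|>2\ell$: on the near-diagonal region $s\in[-1,1]$ so $2s^2/\sinh^2 s \gtrsim 1$ and $K_\ell(r) \gtrsim |r|^{-2}$, controlling that part of the seminorm by $\check{\mathscr{A}}^{\ell,2}(f,f)$; the far region is controlled by $\|f\|_{L^2(\partial T)}^2$ since $\int_{|r|>2\ell}|r|^{-2}dr<\infty$. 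Absorbing both pieces into $\check{\mathscr{A}}^{\ell,2}_1(f,f)$ yields the equivalence.

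Once the norm equivalence is in place, the remaining claims for $(\check{\mathscr{A}}^{\ell,2},\check{\mathscr{G}}^\ell)$ are routine: closedness follows because $H^{1/2}(\partial T)$ is complete in any equivalent norm; the Markovian property follows from the nonnegativity and symmetry of $K_\ell$ together with the standard fact that every normal contraction $\phi$ satisfies $(\phi(a)-\phi(b))^2\leq(a-b)^2$; and regularity is inherited from the density of $C_c^\infty(\partial T)$ in $H^{1/2}(\partial T)$ (and in $C_c(\partial T)$ uniformly). The only genuinely delicate step is the monotonicity calculation in part (2), where one must recognize the correct form of $g(\ell)$ and invoke $\sinh u \leq u\cosh u$; the rest is bookkeeping with standard fractional Sobolev estimates.
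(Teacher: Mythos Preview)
Your proof is correct. Part~(1), the monotonicity in part~(2), and the upper bound $\check{\mathscr{A}}^{\ell,2}(f,f)\lesssim \|f\|_{H^{1/2}(\partial T)}^2$ all coincide with the paper's argument (the paper simply asserts that $\ell\mapsto \ell^2(\cosh(\ell^{-1}x_1)-1)$ is decreasing without writing out the derivative, but the content is the same).

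The only genuine difference is in the \emph{lower} bound of the norm equivalence. The paper does not argue pointwise on the kernel at all; instead it invokes the already--proved inequality \eqref{eq:C18} from Theorem~\ref{THM2}, namely $\|\varphi\|_{H^{1/2}(\partial T)}^2 \lesssim \check{\mathscr{E}}_1(\varphi,\varphi)$, which after the spatial scaling gives $\|\varphi\|_{H^{1/2}}^2 \lesssim_\ell \check{\mathscr{A}}^\ell_1(\varphi,\varphi)$, and then uses part~(1) (the bound \eqref{eq:D1}) to absorb $\check{\mathscr{A}}^{\ell,1}$ into the $L^2$--part, yielding $\|\varphi\|_{H^{1/2}}^2 \lesssim_\ell \check{\mathscr{A}}^{\ell,2}_1(\varphi,\varphi)$. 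In other words, the paper leans on the harmonic--extension machinery behind Theorem~\ref{THM2}. Your near/far splitting of the $H^{1/2}$ seminorm is a more elementary and self--contained alternative: it avoids any appeal to Theorem~\ref{THM2} and gives explicit $\ell$--dependent constants, at the cost of a few extra lines of calculus. Either route is perfectly adequate here.
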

\begin{proof}
\begin{itemize}
\item[(1)] Note that $\int_{\mathbb{R}}\phi(x_1)dx_1=1$ for $\phi(x_1):=\frac{1}{2(\cosh x_1+1)}$.  Hence
\begin{equation}\label{eq:D1}
	\sA^{\ell,1}(f,f)\leq \frac{2}{\ell}\int_{\mathbb{R}\times\mathbb{R}}\frac{\left(f^+(x_1)\right)^2+\left(f^-(x_1')\right)^2}{2\ell\left(\cosh\left(\ell^{-1}(x_1-x_1')\right)+1\right)}dx_1dx_1' =\frac{2}{\ell}\|f\|_{L^2(\partial T)}^2.
\end{equation}
\item[(2)] Note that for any fixed $x_1\in \bR$, the function 
$\ell\mapsto\ell^2\left(\cosh\left(\ell^{-1}x_1\right)-1\right)$ is decreasing on $(0,\infty)$, and 
\begin{equation}\label{EQcosh}
\ell^2\left(\cosh\left(\ell^{-1}x_1\right)-1\right)\geq\lim_{\ell\uparrow \infty}\ell^2\left(\cosh\left(\ell^{-1}x_1\right)-1\right)=\frac{x_1^2}{2}. 
\end{equation}
The first assertion is clear by means of \eqref{EQcosh}.  Particularly,  for $f\in H^{1/2}(\partial T)$,
\[
\check{\mathscr{A}}^{\ell,2}(f,f)\leq 2\int_{\mathbb{R}\times\mathbb{R}}\frac{\left(f^+(x_1)-f^+(x_1')\right)^2+\left(f^-(x_1)-f^-(x_1')\right)^2}{(x_1-x_1')^2}dx_1dx_1'\lesssim \|f\|^2_{H^{1/2}(\partial T)}.  
\]
Then the equivalence between $\|\cdot\|_{{\check{\sA}^{\ell,2}}_1}$ and $\|\cdot\|_{H^{1/2}(\partial T)}$ is due to \eqref{eq:C18} and \eqref{eq:D1}.  
\end{itemize}
\end{proof}
\begin{remark}
From the first assertion,  one may also find that $(\check{\sA}^{\ell,1}, L^2(\partial T))$ and $(\ell\cdot \check{\sA}^{\ell,1}, L^2(\partial T))$ are both regular Dirichlet forms on $L^2(\partial T)$.  
\end{remark}

In what follows,  we will derive the limit of $(\check{\mathscr{A}}^\ell, \check{\mathscr{G}}^\ell)$ in place of $(\check{\sE}^\ell, \check{\sF}^\ell)$ as $\ell\rightarrow \ell_0$.  To do this,  the concept of so-called Mosco convergence will be employed;  see Appendix~\ref{APP}.  Note that the Mosco convergence implies the convergence of finite dimensional distributions of associated Markov processes; see, e.g.,  \cite[Corollary~4.1]{LS19}. Before stating the main result,  we first figure out the limit of $\check{\mathscr{A}}^{\ell,i}$ for $i=1,2$.  The following result is concerned with the case $i=1$.  Particularly,  at a heuristic level,  the divergence of $\check{\mathscr{A}}^{\ell,1}$ as $\ell\downarrow 0$ is concluded. 

\begin{proposition}\label{PROD3}
Let $\ell_n$ be a sequence in $(0,\infty)$ such that $\lim_{n\rightarrow\infty}\ell_n=\ell_0\in[0,\infty]$.  Then the following hold:
\begin{itemize}
\item[(1)] When $\ell_0=0$,  it holds for every $f\in L^2(\partial T)$ that
\begin{equation}\label{eq:D2-2}
		\lim_{n\rightarrow \infty}\ell_n\cdot \check{\mathscr{A}}^{\ell_n,1}(f,f)=\int_\mathbb{R}\left(f^+(x_1)-f^-(x_1)\right)^2dx_1=:\check{\sA}^{0,1}(f,f). 
\end{equation}
Furthermore,  $(\ell_n\cdot \check{\sA}^{\ell_n,1}, L^2(\partial T))$ converges to $(\check{\sA}^{0,1}, L^2(\partial T))$ in the sense of Mosco as $\ell_n\rightarrow 0$.  
\item[(2)] When $\ell_0\in (0,\infty]$,  it holds for every $f\in L^2(\partial T)$ that 
\begin{equation}\label{eq:D3-3}
	\lim_{n\rightarrow \infty} \check{\sA}^{\ell_n,1}(f,f)=\check{\sA}^{\ell_0,1}(f,f),
\end{equation}
where $\check{\sA}^{\infty,1}(f,f):=0$.  Furthermore,  $(\check{\sA}^{\ell_n,1}, L^2(\partial T))$ converges to $(\check{\sA}^{\ell_0,1}, L^2(\partial T))$ in the sense of Mosco as $\ell_n\rightarrow \ell_0$. 
\end{itemize}
\end{proposition}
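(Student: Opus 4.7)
The strategy is to rewrite $\check{\mathscr{A}}^{\ell,1}$ via the scaling substitution $y=\ell^{-1}(x_1-x_1')$, which makes the role of $\ell$ transparent: a direct computation gives
\begin{equation*}
\ell\cdot\check{\mathscr{A}}^{\ell,1}(f,f)=\int_{\bR}\phi(y)\,\|f^+-T_{\ell y}f^-\|_{L^2(\bR)}^2\,dy,
\end{equation*}
where $\phi(y):=\frac{1}{2(\cosh y+1)}$ is a probability density on $\bR$ and $T_h g(x):=g(x-h)$ denotes translation in $L^2(\bR)$. From this representation, all the pointwise and Mosco assertions reduce to facts about the weak/strong behaviour of translations averaged against the probability measure $\phi(y)dy$.

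For the pointwise statements, observe $\|f^+-T_{\ell y}f^-\|^2\leq 2\|f\|_{L^2(\partial T)}^2$ uniformly in $\ell,y$, so $y\mapsto 2\|f\|^2\phi(y)$ is an integrable dominant. By strong continuity of translations in $L^2(\bR)$, for each fixed $y$ one has $\|f^+-T_{\ell_n y}f^-\|^2\to\|f^+-T_{\ell_0 y}f^-\|^2$ (with $T_0:=\mathrm{id}$). Dominated convergence together with $\int\phi\,dy=1$ then yields \eqref{eq:D2-2} when $\ell_0=0$ and \eqref{eq:D3-3} when $\ell_0\in(0,\infty)$; the case $\ell_0=\infty$ follows directly from Lemma~\ref{LMD1}(1) via $\check{\mathscr{A}}^{\ell_n,1}(f,f)\leq\frac{2}{\ell_n}\|f\|^2\to 0$. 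The Mosco upper-bound condition (M2) is then immediate from the constant recovery sequence $u_n\equiv u$, for which (M2) becomes exactly the pointwise convergence just established.

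For the Mosco lower-bound condition (M1), let $u_n\rightharpoonup u$ weakly in $L^2(\partial T)$, so $u_n^\pm\rightharpoonup u^\pm$ in $L^2(\bR)$ and $\sup_n\|u_n\|<\infty$. The key ingredient is the joint weak continuity of translation: if $g_n\rightharpoonup g$ in $L^2(\bR)$ and $h_n\to h$ in $\bR$, then $T_{h_n}g_n\rightharpoonup T_h g$, as $\langle T_{h_n}g_n,\psi\rangle=\langle g_n,T_{-h_n}\psi\rangle\to\langle g,T_{-h}\psi\rangle$ for every test $\psi\in L^2(\bR)$, using strong continuity of $h\mapsto T_h\psi$ together with uniform $L^2$-boundedness of $g_n$. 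Applied with $h_n=\ell_n y$, this gives $u_n^+-T_{\ell_n y}u_n^-\rightharpoonup u^+-T_{\ell_0 y}u^-$ for each fixed $y$, whence weak lower semicontinuity of the $L^2$-norm yields $\liminf_n\|u_n^+-T_{\ell_n y}u_n^-\|^2\geq\|u^+-T_{\ell_0 y}u^-\|^2$. Multiplying by $\phi(y)$ and invoking Fatou delivers (M1) in all cases: in case (1) it produces $\liminf_n\ell_n\check{\mathscr{A}}^{\ell_n,1}(u_n,u_n)\geq\check{\mathscr{A}}^{0,1}(u,u)$; in case (2) with $\ell_0\in(0,\infty)$ the extra factor $1/\ell_n\to 1/\ell_0$ is folded inside the $\liminf$ through Fatou to give $\liminf_n\check{\mathscr{A}}^{\ell_n,1}(u_n,u_n)\geq\check{\mathscr{A}}^{\ell_0,1}(u,u)$; and for $\ell_0=\infty$ (M1) is trivial since the limit form vanishes.

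The most delicate point is (M1) in case (1), $\ell_0=0$: the rescaled forms $\ell_n\check{\mathscr{A}}^{\ell_n,1}$ are genuinely nonlocal quadratic forms whose integration kernels concentrate on the diagonal as $\ell_n\downarrow 0$, while the limit $\check{\mathscr{A}}^{0,1}$ is a purely diagonal form involving only $(f^+-f^-)^2$. Pushing weak $L^2$-convergence through the vanishing translation $T_{\ell_n y}$ is precisely what the joint weak-continuity lemma above accomplishes; once this is secured, Fatou's lemma closes the argument.
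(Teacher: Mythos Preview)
Your proof is correct and, in several respects, more streamlined than the paper's. The key difference is your global change of variables $y=\ell^{-1}(x_1-x_1')$, which recasts $\ell\cdot\check{\mathscr{A}}^{\ell,1}(f,f)$ as $\int_\bR \phi(y)\,\|f^+-T_{\ell y}f^-\|_{L^2}^2\,dy$ and makes every case a statement about averages of translations against a fixed probability density. The paper instead keeps the variables $(x_1,x_1')$ and expands the square: the kernel $\phi_n(x_1-x_1'):=\ell_n^{-1}\phi(\ell_n^{-1}(x_1-x_1'))$ is treated as an approximate identity, so the cross term becomes $\langle f^+,\phi_n*f^-\rangle\to\langle f^+,f^-\rangle$, yielding \eqref{eq:D2-2}; for $\ell_0\in(0,\infty)$ it uses monotonicity of $\ell\mapsto\cosh(\ell^{-1}z)$ and monotone convergence.

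The most visible methodological divergence is in Mosco~(a). You argue via the joint weak continuity lemma $T_{h_n}g_n\rightharpoonup T_h g$ (when $g_n\rightharpoonup g$, $h_n\to h$), then weak lower semicontinuity of the $L^2$-norm, then Fatou on $\int\phi(y)\,(\cdot)\,dy$. This is a clean, general mechanism that treats all cases uniformly. The paper instead uses a Cauchy--Schwarz reduction: showing $(f_n-f,f)_{m_{n1}}\to 0$, from which $(f,f)_{m_{01}}=\lim(f_n,f)_{m_{n1}}\le\liminf(f_n,f_n)_{m_{n1}}^{1/2}(f,f)_{m_{n1}}^{1/2}$; the bilinear term is then split into four pieces $I_1^\pm,I_2^\pm$ and each is handled by the approximate-identity convergence $\phi_n*f^\mp\to f^\mp$ together with weak convergence of $f_n^\pm$. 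Your approach is more conceptual and shorter; the paper's is more explicit and avoids the auxiliary weak-continuity lemma, at the cost of more bookkeeping.
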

\begin{proof}
\begin{itemize}
\item[(1)] Let $\phi$ be in the proof of Lemma~\ref{LMD1}.  Note that $\phi_n(x_1):=\ell_n^{-1}\phi(x_1/\ell_n)$ forms a class of approximations to identity.  Hence
\[
((f^-)*\phi_n)(x_1)=\int_\mathbb{R}\frac{f^-(x_1')}{2\ell_n\left(\cosh\left(\ell_n^{-1}(x_1-x_1')\right)+1\right)}dx_1'\rightarrow f^-(x_1)
\]
in $L^2(\mathbb{R})$. It follows that 
\[\int_{\mathbb{R}\times\mathbb{R}}\frac{f^+(x_1)f^-(x_1')}{2\ell_n\left(\cosh\left(\ell_n^{-1}(x_1-x_1')\right)+1\right)}dx_1dx_1'\rightarrow\int_\mathbb{R}f^+(x_1)f^-(x_1)dx_1.\]
Together with \eqref{eq:D1},  we can obtain  \eqref{eq:D2-2}.  To prove the Mosco convergence, it suffices to verify its first part Mosco (a) due to   \eqref{eq:D2-2}.  To do this,  set for every $f,g\in L^2(\partial T)$,
\[
	(f,g)_{m_{n1}}:=\ell_n\cdot \check{\sA}^{\ell_n,1}(f,g),\quad (f,g)_{m_{01}}:=\check{\sA}^{0,1}(f,g), 
\]
and take a sequence $\{f_n\}\subset L^2(\partial T)$ such that $f_n$ converges to $f$ weakly in $L^2(\partial T)$.  We only need to show 
\begin{equation}\label{eq:D3-2}
	(f_n-f,f)_{m_{n1}}\rightarrow 0,\quad n\rightarrow\infty,
\end{equation}
so that
\[
(f,f)_{m_{01}}=\lim_{n\rightarrow\infty}(f,f)_{m_{n1}}=\lim_{n\rightarrow\infty}(f_n,f)_{m_{n1}}\leq \lim_{n\rightarrow\infty} (f_n,f_n)_{m_{n1}}^{\frac{1}{2}}(f,f)_{m_{01}}^{\frac{1}{2}}
\]
leads to $(f,f)_{m_{01}}\leq  \lim_{n\rightarrow\infty} (f_n,f_n)_{m_{n1}}$.  In fact, 
\[
\begin{split}
 \qquad (f_n-f,f)_{m_{n1}}&=\int_{\mathbb{R}\times\mathbb{R}}\frac{\left((f^+_n-f^+)(x_1)-(f^-_n-f^-)(x_1')\right)(f^+(x_1)-f^-(x_1'))}{2\ell_n\left(\cosh\left(\ell_n^{-1}(x_1-x_1')\right)+1\right)}dx_1dx_1'\\
&=: I_1^++I_1^-+I_2^++I_2^-,
\end{split}
\] 
where 
\[
I_1^\pm=\int_{\mathbb{R}\times\mathbb{R}}\frac{(f^\pm_n- f^\pm)(x_1) f^\pm(x_1)}{2\ell_n\left(\cosh\left(\ell_n^{-1}(x_1-x_1')\right)+1\right)}dx_1dx_1'
\]
and 
\begin{equation}\label{eq:D5}
I_2^\pm=-\int_{\mathbb{R}\times\mathbb{R}}\frac{(f^\pm_n- f^\pm)(x_1) f^\mp(x_1')}{2\ell_n\left(\cosh\left(\ell_n^{-1}(x_1-x_1')\right)+1\right)}dx_1dx_1'.
\end{equation}
For the terms $I_1^\pm$,  it follows from $\int_{\mathbb{R}}\phi_n(x)dx=1$ and
$f_n^\pm\rightarrow  f^\pm$ weakly in $L^2(\bR)$ that
\[
I_1^\pm= \int_{\mathbb{R}}(f^\pm_n-f^\pm)(x_1)f^\pm(x_1)dx_1\rightarrow 0.
\]
For the terms $I_2^\pm$,  it follows from $g_{n}^{\mp}:= f^\mp *\phi_n\rightarrow f^\mp$ in $L^2(\mathbb{R})$ and $f^\pm_n\rightarrow f^\pm$ weakly in $L^2(\mathbb{R})$ that $I_2^\pm\rightarrow 0$.  Therefore \eqref{eq:D3-2} can be concluded.
\item[(2)]  When $\ell_0\in (0,\infty)$,  note that $\ell\mapsto \cosh \left( \ell^{-1}(x_1-x'_1)\right)$ is decreasing and \eqref{eq:D3-3} is implied by the monotone convergence theorem.  When $\ell_0=\infty$,  it follows from Lemma~\ref{LMD1}~(1) that
\[
\check{\sA}^{\ell_n,1}(f,f) \leq  \frac{2}{\ell_n}\|f\|_{L^2(\partial T)}^2\rightarrow 0.
\]
Hence \eqref{eq:D3-3} is obtained.  The second part (b) of Mosco convergence is clear due to \eqref{eq:D3-3}.  When $\ell_0=\infty$,  the first part (a) of Mosco convergence trivially holds.  
 To prove the first part (a) of Mosco convergence for the case $\ell_0\in (0,\infty)$,  it suffices to show $I^\pm_2$ in \eqref{eq:D5} converge to $0$ as $n\rightarrow \infty$.  Indeed,  it follows from the Young inequality for convolutions that 
\[
\|g_{n}^\mp-g_0^\mp\|_{L^2(\mathbb{R})}\lesssim \|f^\mp\|_{L^2(\mathbb{R})}\int_\mathbb{R}\left|\frac{1}{\cosh \ell_n^{-1}x_1'+1}-\frac{1}{\cosh \ell_0^{-1}x_1'+1}\right|dx_1',
\]
where $g_0^\mp:=f^\mp*\phi_0$ for $\phi_0(x):=\ell_0^{-1}\phi(x/\ell_0)$.
The dominated convergence theorem yields
\[
\int_\mathbb{R}\left|\frac{1}{\cosh \ell_n^{-1}x_1'+1}-\frac{1}{\cosh \ell_0^{-1}x_1'+1}\right|dx_1'\rightarrow 0.
\]
Hence $g_n^\mp\rightarrow g_0^\mp$ in $L^2(\mathbb{R})$.   From $f^\pm_n\rightarrow f^\pm$ weakly in $L^2(\mathbb{R})$,  we can eventually conclude that $I_2^\pm\rightarrow 0$.  
\end{itemize}
\end{proof}

Next,  we state the result concerning the limit of $\check{\sA}^{\ell,2}$ as $\ell\rightarrow \ell_0\in [0,\infty]$.  To do this,  set $\check{\sA}^{0,2}(f,f):=0$ for any $f\in \check{\mathscr{G}}^{0}:=L^2(\partial T)$, and for any $f\in \check{\sG}^\infty:= H^{1/2}(\partial T)$,
\[
\quad \check{\sA}^{\infty,2}(f,f):=2\int_{\mathbb{R}\times\mathbb{R}}\frac{\left(f^+(x_1)-f^+(x_1')\right)^2+\left(f^-(x_1)-f^-(x_1')\right)^2}{(x_1-x_1')^2}dx_1dx_1'.
\]
It is easy to see that the Dirichlet form $(\check{\sA}^{\infty,2}, \check{\sG}^\infty)$ is associated with a Markov process,  which consists of two distinct Cauchy processes, i.e. $\alpha$-stable processes with $\alpha=1$, on $\partial T^\pm$ respectively. 

\begin{proposition}\label{PROD4}
Let $\ell_n$ be a sequence in $(0,\infty)$ such that $\lim_{n\rightarrow\infty}\ell_n=\ell_0\in[0,\infty]$.  It holds for every $f\in H^{1/2}(\partial T)$ that
\begin{equation}\label{eq:D6}
	\lim_{n\rightarrow \infty} \check{\sA}^{\ell_n,2}(f,f)=\check{\sA}^{\ell_0,2}(f,f).
\end{equation}
Furthermore,  $(\check{\sA}^{\ell_n,2},  \check{\mathscr{G}}^{\ell_n})$ converges to $(\check{\sA}^{\ell_0,2},\check{\mathscr{G}}^{\ell_0})$ in the sense of Mosco as $n\rightarrow \infty$. 
\end{proposition}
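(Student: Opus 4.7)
The plan is to separate the pointwise convergence \eqref{eq:D6} from the Mosco convergence, and to drive the latter by monotonicity of the kernel together with the weak lower semicontinuity of closed quadratic forms on $L^2(\partial T)$.

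First I would prove \eqref{eq:D6} by dominated (or, thanks to monotonicity, monotone) convergence. As observed in the proof of Lemma~\ref{LMD1}(2), the kernel $\ell\mapsto [\ell^2(\cosh(\ell^{-1}y)-1)]^{-1}$ is increasing on $(0,\infty)$ for every fixed $y\neq 0$, with pointwise limits $0$ as $\ell\downarrow 0$ and $2/y^2$ as $\ell\uparrow\infty$. Since $\ell^2(\cosh(\ell^{-1}y)-1)\geq y^2/2$ for all $\ell$, the integrand of $\check{\sA}^{\ell,2}(f,f)$ is, for $f\in H^{1/2}(\partial T)$, uniformly dominated by the integrand of $\check{\sA}^{\infty,2}(f,f)$, which is integrable. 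Dominated convergence then yields \eqref{eq:D6} in all three regimes $\ell_0\in\{0\}\cup (0,\infty)\cup\{\infty\}$.

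Next I would verify Mosco~(b). For $\ell_0\in (0,\infty]$ the constant choice $f_n:=f$ works, since \eqref{eq:D6} gives $\check{\sA}^{\ell_n,2}(f,f)\to \check{\sA}^{\ell_0,2}(f,f)$. The case $\ell_0=0$ is subtler and is the main obstacle: the limiting domain $\check{\sG}^{0}=L^2(\partial T)$ strictly contains $H^{1/2}(\partial T)$ while the limiting energy degenerates to $0$, so for $f\in L^2(\partial T)\setminus H^{1/2}(\partial T)$ the constant sequence is inadmissible and some approximation must be carried out. I would pick $g_k\in C_c^\infty(\partial T)\subset H^{1/2}(\partial T)$ with $g_k\to f$ in $L^2(\partial T)$, observe that $\check{\sA}^{\ell_n,2}(g_k,g_k)\to 0$ as $n\to\infty$ for each fixed $k$ by the first step, and extract a diagonal sequence $f_n:=g_{k(n)}$ with $k(n)\uparrow\infty$ slowly enough that simultaneously $f_n\to f$ in $L^2(\partial T)$ and $\check{\sA}^{\ell_n,2}(f_n,f_n)\to 0$. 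This is precisely the kind of diagonal extraction used in the proof of Proposition~\ref{PROD3}(1).

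Finally I would verify Mosco~(a). For $\ell_0=0$ there is nothing to prove since $\check{\sA}^{0,2}\equiv 0$. For $\ell_0\in (0,\infty]$, given $f_n\to f$ weakly in $L^2(\partial T)$, I fix an auxiliary $\ell'\in (0,\ell_0)$. For all $n$ sufficiently large $\ell_n>\ell'$, so the monotonicity in Lemma~\ref{LMD1}(2) provides
\begin{equation*}
\check{\sA}^{\ell_n,2}(f_n,f_n)\;\geq\;\check{\sA}^{\ell',2}(f_n,f_n),
\end{equation*}
with both sides read as $+\infty$ whenever $f_n\notin H^{1/2}(\partial T)$. Since $(\check{\sA}^{\ell',2},\check{\sG}^{\ell'})$ is a closed nonnegative quadratic form on $L^2(\partial T)$, its extension by $+\infty$ is convex and lower semicontinuous in $L^2$-norm, hence weakly lower semicontinuous; therefore $\liminf_n\check{\sA}^{\ell',2}(f_n,f_n)\geq \check{\sA}^{\ell',2}(f,f)$. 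Letting $\ell'\uparrow\ell_0$ and invoking the monotone convergence of the first step to pass $\check{\sA}^{\ell',2}(f,f)\uparrow \check{\sA}^{\ell_0,2}(f,f)$ completes Mosco~(a) and hence the proof.
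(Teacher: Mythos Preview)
Your proof is correct and follows essentially the same route as the paper: dominated convergence via \eqref{EQcosh} for the pointwise limit, a diagonal extraction for Mosco~(b) in the degenerate case $\ell_0=0$, and monotonicity in $\ell$ combined with weak lower semicontinuity of a fixed closed form for Mosco~(a). The only notable difference is that where the paper cites \cite[Theorem~2.3]{B09} to obtain the liminf inequality for the auxiliary form $\check{\sA}^{\ell',2}$, you invoke directly the standard fact that a convex, strongly lower semicontinuous functional on a Hilbert space is weakly lower semicontinuous; this makes your argument slightly more self-contained but is otherwise the same mechanism.
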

\begin{proof}
Clearly,  \eqref{eq:D6} is a consequence of \eqref{EQcosh} and the dominated convergence theorem.  In what follows we prove the Mosco convergence for the three cases respectively.

For the case $\ell_0=0$ the first part (a) of Mosco convergence trivially holds. To prove the second part (b),  let $f\in L^2(\partial T)$.  Take a sequence $\{g_j:j\geq 1\}\subset H^{1/2}(\partial T)$ such that $g_j$ converges to $f$ in $L^2(\partial T)$.  Since for each $j\geq 1$,
\[
		\lim_{n\rightarrow \infty}\check{\sA}^{\ell_n,2}(g_j,g_j)=\check{\sA}^{0,2}(g_j,g_j)=0, 
\]
we can find an increasing subsequence $\{n_j\}$ such that
\[
	\check{\sA}^{\ell_n,2}(g_j,g_j)<2^{-j},\quad \forall n\geq n_j.
\]
Put $f_1=\cdots=f_{n_1-1}=0$ and $f_{n_j}=\cdots =f_{n_{j+1}-1}=g_j$ for $j\geq 1$.  One can easily verify that $f_n$ converges to $f$ in $L^2(\partial T)$ and $\lim_{n\rightarrow \infty}\check{\sA}^{\ell_n,2}(f_n,f_n)=0$.  
Next we consider the case $\ell_0\in (0,\infty)$.  The second part (b) of Mosco convergence trivially holds due to \eqref{eq:D6}.  It suffices to prove the first part (a).  For any $\varepsilon>0$,  there exists $N>0$ such that for any $n>N$,  $\ell_n>\ell_0-\varepsilon$.  Then for any $n>N$,  
\[
	\check{\sA}^{\ell_n,2}(f,f)\geq \check{\sA}^{\ell_0-\varepsilon,2}(f,f),\quad \forall f\in \check{\mathscr{G}}^{\ell_n}=\check{\mathscr{G}}^{\ell_0-\varepsilon}.
\]
Mimicking the proof of \cite[Theorem~2.3]{B09},  we can obtain that for any sequence $\{f_n\}$ converging to $f$ weakly in $L^2(\partial T)$,  it holds 
\[
\liminf_{n\rightarrow \infty} \check{\sA}^{\ell_n,2}(f_n,f_n)=\liminf_{n>N,  n\rightarrow \infty} \check{\sA}^{\ell_n,2}(f_n,f_n)\geq \check{\sA}^{\ell_0-\varepsilon,2}(f,f).
\]
Since $\lim_{\varepsilon\downarrow 0}\check{\sA}^{\ell_0-\varepsilon,2}(f,f)=\check{\sA}^{\ell_0,2}(f,f)$ due to \eqref{eq:D6},  we eventually have
\[
	\liminf_{n\rightarrow \infty} \check{\sA}^{\ell_n,2}(f_n,f_n)\geq \check{\sA}^{\ell_0,2}(f,f).
\]
Finally, the Mosco convergence for the case $\ell_0=\infty$ can be derived analogously.  That completes the proof.
\end{proof}

Set 
\[
	\check{\sA}^0(f,f):=\frac{1}{2\pi}\check{\sA}^{0,1}(f,f),\quad f\in \check{\mathscr{G}}^0=L^2(\partial T),
\]
and 
\[
	\check{\sA}^\infty(f,f):=\frac{1}{8\pi}\check{\sA}^{\infty,2}(f,f),\quad f\in \check{\mathscr{G}}^\infty=H^{1/2}(\partial T).
\]
Now we have a position to present the limit of the trace Dirichlet forms $(\check{\sA}^\ell,\check{\mathscr{G}}^\ell)$ as $\ell\rightarrow \ell_0\in [0,\infty]$.  Particularly,  they diverge as $\ell\downarrow 0$.  

\begin{theorem}\label{THMD5}
Let $\ell_n$ be a sequence in $(0,\infty)$ such that $\lim_{n\rightarrow\infty}\ell_n=\ell_0\in[0,\infty]$.  Then the following hold:
\begin{itemize}
\item[(1)] When $\ell_0=0$,  $(\ell_n\cdot \check{\sA}^{\ell_n}, \check{\mathscr{G}}^{\ell_n})$ converges to $(\check{\sA}^0,\check{\mathscr{G}}^0)$ as $n\rightarrow \infty$ in the sense of Mosco.
\item[(2)] When $\ell_0\in (0,\infty]$,  $(\check{\sA}^{\ell_n}, \check{\mathscr{G}}^{\ell_n})$ converges to $(\check{\sA}^{\ell_0},\check{\mathscr{G}}^{\ell_0})$ as $n\rightarrow \infty$ in the sense of Mosco.
\end{itemize}
\end{theorem}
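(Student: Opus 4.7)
The plan is to derive the Mosco convergence of the composite forms $(\check{\sA}^{\ell_n},\check{\sG}^{\ell_n})$ by adding the two Mosco convergences already established for the summands $\check{\sA}^{\ell,1}$ and $\check{\sA}^{\ell,2}$ in Proposition~\ref{PROD3} and Proposition~\ref{PROD4}, keeping in mind that
$$\check{\sA}^\ell(f,f)=\tfrac{1}{2\pi}\check{\sA}^{\ell,1}(f,f)+\tfrac{1}{8\pi}\check{\sA}^{\ell,2}(f,f),\qquad \check{\sA}^0=\tfrac{1}{2\pi}\check{\sA}^{0,1},\qquad \check{\sA}^\infty=\tfrac{1}{8\pi}\check{\sA}^{\infty,2}.$$
Mosco convergence requires verifying a liminf inequality along $L^2$-weak sequences (part (a)) and the existence of a strongly $L^2$-convergent recovery sequence (part (b)); I will handle the two cases separately.

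For part (2), where $\ell_0\in(0,\infty]$, the domain $\check{\sG}^{\ell_n}=H^{1/2}(\partial T)$ is independent of $n$ (and equals $\check{\sG}^{\ell_0}$). Given $f_n\to f$ weakly in $L^2(\partial T)$, Proposition~\ref{PROD3}(2)(a) and Proposition~\ref{PROD4} together yield
$$\liminf_{n\to\infty}\check{\sA}^{\ell_n}(f_n,f_n)\ge \tfrac{1}{2\pi}\check{\sA}^{\ell_0,1}(f,f)+\tfrac{1}{8\pi}\check{\sA}^{\ell_0,2}(f,f)=\check{\sA}^{\ell_0}(f,f),$$
with the understanding that the right side equals $+\infty$ if $f\notin H^{1/2}(\partial T)$. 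For a recovery sequence, for any $f\in\check{\sG}^{\ell_0}$ I simply take $f_n\equiv f$, and the pointwise convergences \eqref{eq:D3-3} and \eqref{eq:D6} give $\check{\sA}^{\ell_n}(f,f)\to\check{\sA}^{\ell_0}(f,f)$.

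For part (1), where $\ell_0=0$, the liminf inequality is immediate: if $f_n\to f$ weakly in $L^2$, then Proposition~\ref{PROD3}(1)(a) together with $\ell_n\cdot\check{\sA}^{\ell_n,2}(f_n,f_n)\ge 0$ gives $\liminf_n\ell_n\cdot\check{\sA}^{\ell_n}(f_n,f_n)\ge\tfrac{1}{2\pi}\check{\sA}^{0,1}(f,f)=\check{\sA}^0(f,f)$. The delicate point is the recovery sequence for a general $f\in\check{\sG}^0=L^2(\partial T)$, because $f\notin H^{1/2}$ in general, so the second term $\check{\sA}^{\ell_n,2}(f,f)$ cannot be used directly. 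To bypass this I would approximate: pick $g_k\in H^{1/2}(\partial T)$ with $g_k\to f$ in $L^2$, note that $\check{\sA}^{0,1}$ is $L^2$-continuous (it is a bounded quadratic form on $L^2$), so $\check{\sA}^{0,1}(g_k,g_k)\to\check{\sA}^{0,1}(f,f)$. For each fixed $k$, Proposition~\ref{PROD3}(1) gives $\ell_n\check{\sA}^{\ell_n,1}(g_k,g_k)\to\check{\sA}^{0,1}(g_k,g_k)$, while Proposition~\ref{PROD4} yields $\check{\sA}^{\ell_n,2}(g_k,g_k)\to 0$, and multiplying by $\ell_n\to 0$ makes $\ell_n\check{\sA}^{\ell_n,2}(g_k,g_k)\to 0$. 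A standard diagonal extraction $n\mapsto k(n)$ with $k(n)\to\infty$ slowly enough then produces $f_n:=g_{k(n)}$ with $f_n\to f$ in $L^2$ and $\ell_n\check{\sA}^{\ell_n}(f_n,f_n)\to\tfrac{1}{2\pi}\check{\sA}^{0,1}(f,f)=\check{\sA}^0(f,f)$.

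The main obstacle is the diagonal argument in part (1): one has to simultaneously arrange strong $L^2$ convergence $f_n\to f$, matching of the first-term energy along the moving parameter $\ell_n$, and vanishing of the second-term energy, while the natural approximating sequences for each of these three requirements are different. Once the interlacing is done properly, everything else reduces to assembling the ingredients from Proposition~\ref{PROD3} and Proposition~\ref{PROD4}.
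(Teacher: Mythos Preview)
Your proposal is correct and follows essentially the same route as the paper. Both proofs assemble the Mosco liminf inequality from Propositions~\ref{PROD3} and~\ref{PROD4} via superadditivity of $\liminf$, and both handle the recovery sequence in part~(1) by approximating $f\in L^2(\partial T)$ with $g_k\in H^{1/2}(\partial T)$ and running a diagonal extraction. The only difference is that in part~(2)(b) you take the constant sequence $f_n\equiv f$, which is legitimate and simpler than the paper's wording (``same argument as the first assertion''), since here all domains equal $H^{1/2}(\partial T)$ and the pointwise convergences \eqref{eq:D3-3}, \eqref{eq:D6} apply directly.
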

\begin{proof}
\begin{itemize}
\item[(1)] Let $\{f_n\}$ be a sequence that converges to $f$ weakly in $L^2(\partial T)$.  Then it follows from Proposition~\ref{PROD3}~(1) that 
\[
	\check{\sA}^0(f,f)=\frac{1}{2\pi}\check{\sA}^{0,1}(f,f)\leq \frac{1}{2\pi}\liminf_{n\rightarrow \infty} \ell_n \check{\sA}^{\ell_n,1}(f_n,f_n)\leq \liminf_{n\rightarrow \infty} \ell_n \check{\sA}^{\ell_n}(f_n,f_n).  
\]
Hence the first part (a) of Mosco convergence is verified.  Now fix $f\in \check{\mathscr{G}}^0$.  Take a sequence $g_j\in H^{1/2}(\partial T)$ converging to $f$ in $L^2(\partial T)$,  and we have by the definition of $\check{\sA}^{0}$ that
\[
	\lim_{j\rightarrow \infty} \check{\sA}^0(g_j,g_j)=\check{\sA}^0(f,f). 
\]
It follows from \eqref{eq:D2-2} and \eqref{eq:D6} that for any $j\geq 1$,
\[
	\lim_{n\rightarrow \infty}\ell_n\check{\sA}^{\ell_n}(g_j,g_j)=\check{\sA}^0(g_j,g_j).   
\]
Hence we can find an increasing subsequence $\{n_j\}$ such that 
\[
	|\ell_n\check{\sA}^{\ell_n}(g_j,g_j)-\check{\sA}^0(g_j,g_j)|<2^{-j},\quad \forall n\geq n_j. 
\]
Put $f_1=\cdots=f_{n_1-1}=0$ and $f_{n_j}=\cdots =f_{n_{j+1}-1}=g_j$ for $j\geq 1$.  One can easily verify that $f_n$ converges to $f$ in $L^2(\partial T)$ and $\lim_{n\rightarrow \infty}\ell_n\check{\sA}^{\ell_n}(f_n,f_n)=\check{\sA}^0(f,f)$.  Therefore the second part (b) of Mosco convergence is concluded.
\item[(2)]  Let $\{f_n\}$ be a sequence that converges to $f$ weakly in $L^2(\partial T)$.  Then it follows from Proposition~\ref{PROD3}~(1) and Proposition~\ref{PROD4} that 
\[
\begin{aligned}
	\check{\sA}^{\ell_0}(f,f)&=\frac{1}{2\pi}\check{\sA}^{\ell_0,1}(f,f)+\frac{1}{8\pi}\check{\sA}^{\ell_0,2}(f,f)\\
	&\leq \frac{1}{2\pi}\liminf_{n\rightarrow \infty} \check{\sA}^{\ell_n,1}(f_n,f_n)+\frac{1}{8\pi}\liminf_{n\rightarrow \infty}\check{\sA}^{\ell_n,2}(f_n,f_n)\\
	& \leq \liminf_{n\rightarrow \infty} \check{\sA}^{\ell_n}(f_n,f_n).  
\end{aligned}\]
Hence the first part (a) of Mosco convergence is verified.  The second part (b) of Mosco convergence can be derived by the same argument as that in the proof of the first assertion.
\end{itemize}
\end{proof}

We end this section with a remark on the limit of $(\check{\sE}^\ell,\check{\sF}^\ell)$ as $\ell \downarrow 0$ or $\ell \uparrow \infty$.  
Note that the distance between the two components of $\partial \bar{\Omega}_\ell$ is $\pi \ell$.  When $\ell\downarrow 0$,  the two components tend to the common $x_1$-axis.  At a heuristic level,  the first kind of jump enjoyed by the trace process $\check{B}^\ell$,  characterized by the first term in the expression of $\check{\sE}^\ell$, becomes so frequent that its energy diverges as $\ell\downarrow 0$.  Particularly,  $\check{\sE}^\ell$ (or more exactly $\check{\sA}^\ell$) also diverges as $\ell\downarrow 0$.  However when $\ell\uparrow \infty$,  the distance between the two components of $\partial \bar{\Omega}^\ell$ tends to infinity.  Meanwhile the connection of the two components is cut off,  and the limiting process of $\check{B}^\ell$ is a union of two distinct Cauchy processes on the upper and lower real lines respectively. 

\appendix

\section{Mosco convergence}\label{APP}

Let $(\EE^n,\FF^n)$ be a sequence of closed forms on a same Hilbert space $L^2(E,m)$, and $(\EE,\FF)$ be another closed form on $L^2(E,m)$. We always extend the domains of $\EE$ and $\EE_n$ to $L^2(E,m)$ by letting
\[
\begin{aligned}
	\EE(u,u)&:=\infty, \quad u\in L^2(E,m)\setminus \FF, \\ 
	\EE^n(u,u)&:=\infty,\quad u\in L^2(E,m)\setminus \FF^n.
\end{aligned}
\]
In other words, $u\in \FF$ (resp. $u\in \FF^n$) if and only if $\EE(u,u)<\infty$ (resp. $\EE^n(u,u)<\infty$). 
Furthermore, we say $u_n$ converges to $u$ weakly in $L^2(E,m)$, if for any $v\in L^2(E,m)$, $(u_n,v)_m\rightarrow (u,v)_m$ as $n\rightarrow \infty$, and (strongly) in $L^2(E,m)$, if $\|u_n-u\|_{L^2(E,m)}\rightarrow 0$. 

\begin{definition}\label{DEF41}
Let $(\EE^n,\FF^n)$ and $(\EE,\FF)$ be given above. Then $(\EE^n,\FF^n)$ is said to be convergent to $(\EE,\FF)$ in the sense of Mosco, if
\begin{itemize}
\item[(a)] For any sequence $\{u_n:n\geq 1\}\subset L^2(E,m)$ that converges weakly to $u$ in $L^2(E,m)$, it holds that
\begin{equation}\label{eqMoscoa}
	\EE(u,u)\leq \liminf_{n\rightarrow \infty}\EE^n(u_n,u_n). 
\end{equation}
\item[(b)] For any $u\in L^2(E,m)$, there exists a sequence $\{u_n:n\geq 1\}\subset L^2(E,m)$ that converges strongly to $u$ in $L^2(E,m)$ such that
\begin{equation}\label{eqA2}
	\EE(u,u)\geq \limsup_{n\rightarrow \infty}\EE^n(u_n,u_n). 
\end{equation}
\end{itemize}
\end{definition} 

Let $(T^n_t)$ and $(T_t)$ be the semigroups of $(\EE^n, \FF^n)$ and $(\EE,\FF)$ respectively, and $(G^n_\alpha), (G_\alpha)$ be their corresponding resolvents.  The following result is well-known (see \cite{U94}).

\begin{theorem}
Let $(\EE^n, \FF^n), (\EE, \FF)$ be above. Then the following are equivalent:
\begin{itemize}
\item[(1)] $(\EE^n,\FF^n)$ converges to $(\EE,\FF)$ in the sense of Mosco;
\item[(2)] For every $t>0$ and $f\in L^2(E,m)$,  $T^n_tf$ converges to $T_tf$ strongly in $L^2(E,m)$; 
\item[(3)] For every $\alpha>0$ and $f\in L^2(E,m)$,  $G^n_\alpha f$ converges to $G_\alpha f$ strongly in $L^2(E,m)$. 
\end{itemize}
\end{theorem}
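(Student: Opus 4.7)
The plan is to establish the cycle $(1)\Rightarrow (3)\Rightarrow (2)\Rightarrow (3)\Rightarrow (1)$, or more economically to prove $(1)\Leftrightarrow (3)$ directly by variational arguments and then invoke the classical Trotter--Kato correspondence between resolvents and semigroups to match $(3)\Leftrightarrow (2)$. Throughout I will exploit the one-to-one correspondence between closed symmetric forms and nonnegative self-adjoint operators on $L^2(E,m)$, and the variational characterization
\[
G_\alpha^n f = \arg\min_{v\in L^2(E,m)}\bigl\{ \EE^n(v,v)+\alpha\|v\|^2_{L^2}-2(f,v)_m\bigr\},
\]
with the analogous identity for $G_\alpha f$.

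For $(1)\Rightarrow (3)$, fix $\alpha>0$ and $f\in L^2(E,m)$, and set $u_n:=G_\alpha^n f$, $u:=G_\alpha f$. The variational inequality $\EE^n(u_n,u_n)+\alpha\|u_n\|^2\le \|f\|^2/\alpha$ yields a uniform bound, so (along a subsequence) $u_n$ converges weakly in $L^2(E,m)$ to some $w$. Applying Mosco (a) together with the obvious weak convergence $u_n\to w$, one obtains
\[
\EE(w,w)+\alpha\|w\|^2-2(f,w)\le\liminf_{n\to\infty}\bigl(\EE^n(u_n,u_n)+\alpha\|u_n\|^2-2(f,u_n)\bigr).
\]
On the other hand, Mosco (b) produces, for any $v\in L^2(E,m)$, a strong approximating sequence $v_n\to v$ with $\limsup_n \EE^n(v_n,v_n)\le\EE(v,v)$, which combined with the minimizing property of $u_n$ shows the liminf above is bounded by $\EE(v,v)+\alpha\|v\|^2-2(f,v)$. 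Taking $v=u$ identifies $w=u$ and also forces $\EE^n(u_n,u_n)+\alpha\|u_n\|^2\to \EE(u,u)+\alpha\|u\|^2$, which upgrades the weak convergence to strong convergence by the parallelogram identity applied to the Hilbert norm $(\EE^n_\alpha)^{1/2}$ versus the limiting form.

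For $(3)\Rightarrow (1)$, I will use the spectral/Yosida approximation identity
\[
\EE(u,u)=\lim_{\beta\uparrow\infty}\beta\bigl(u-\beta G_\beta u,\, u\bigr)_m,\qquad u\in L^2(E,m),
\]
where the right-hand side is monotone increasing in $\beta$ and equals $+\infty$ precisely when $u\notin\FF$ (and analogously for each $\EE^n$). Given any $u_n\to u$ weakly in $L^2$, resolvent convergence gives $\beta G_\beta^n u_n\to \beta G_\beta u$ weakly for each fixed $\beta$ (together with strong convergence on $u$), which combined with the $\beta$-monotone approximation yields Mosco (a). For Mosco (b), I take $u_n:=\beta_n G_{\beta_n}^n u$ for a suitably chosen sequence $\beta_n\uparrow\infty$: strong convergence of resolvents to the identity on $L^2$ as $\beta\to\infty$ and a diagonal argument produce $u_n\to u$ strongly in $L^2$ with $\limsup\EE^n(u_n,u_n)\le \EE(u,u)$, because $\EE^n(\beta G_\beta^n u,\beta G_\beta^n u)\le \beta(u-\beta G_\beta^n u,u)$ (the elementary Yosida bound) and the right-hand side converges to $\EE(u,u)$.

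Finally, $(2)\Leftrightarrow(3)$ is the standard Trotter--Kato equivalence for strongly continuous contraction semigroups on a Hilbert space: resolvents and semigroups of nonnegative self-adjoint operators determine each other via the Laplace transform $G_\alpha f=\int_0^\infty e^{-\alpha t}T_tf\,dt$, and pointwise strong convergence of the one on a dense (in fact, the whole) set, together with the uniform contraction bound $\|T_t^n\|,\|G_\alpha^n\|\le 1\wedge \alpha^{-1}$, transfers in both directions; I will just cite this as a well-known operator-theoretic fact and sketch the Laplace-inversion direction for completeness. The main obstacle is the $(3)\Rightarrow(1)$ implication, specifically constructing the recovery sequence in Mosco (b): the Yosida approximations $\beta_n G_{\beta_n}^n u$ only converge strongly in $L^2$ if $\beta_n$ is chosen growing slowly enough relative to the rate at which $G_{\beta_n}^n$ approaches $G_{\beta_n}$, and this diagonalization has to be carried out carefully.
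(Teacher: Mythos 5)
The paper does not actually prove this theorem: it is quoted as a well-known result with a citation (Mosco's original work), so the only meaningful comparison is with the classical argument --- which is exactly the route you follow: the variational characterization of $G^n_\alpha f$ for $(1)\Rightarrow(3)$, the Yosida/Moreau approximation of the forms for $(3)\Rightarrow(1)$, and the Trotter--Kato correspondence (equivalently, spectral calculus for nonnegative self-adjoint operators) for $(2)\Leftrightarrow(3)$. Your $(1)\Rightarrow(3)$ is correct in outline (the identification of the weak limit, the convergence of $\EE^n(u_n,u_n)+\alpha\|u_n\|^2$, and the upgrade to strong convergence via norm convergence plus weak convergence all work, modulo the routine subsequence bookkeeping), and the diagonalization for the recovery sequence in Mosco (b), which you flag as the main obstacle, is in fact routine: choose $\beta_j\uparrow\infty$ and then $n_j$ so large that $\|\beta_j G^n_{\beta_j}u-\beta_j G_{\beta_j}u\|<1/j$ and the Yosida energies differ by less than $1/j$ for $n\ge n_j$.

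The one step that is not justified as written is Mosco (a) in $(3)\Rightarrow(1)$. You assert that, for fixed $\beta$, weak convergence $u_n\rightharpoonup u$ plus strong resolvent convergence gives $\beta G^n_\beta u_n\rightharpoonup \beta G_\beta u$ weakly, and that this ``combined with the $\beta$-monotone approximation yields Mosco (a).'' The weak convergence of $\beta G^n_\beta u_n$ is true, but it does not by itself give $\liminf_n \beta(u_n-\beta G^n_\beta u_n,u_n)_m\ge \beta(u-\beta G_\beta u,u)_m$: the problematic term is $(G^n_\beta u_n,u_n)_m$, a pairing of two merely weakly convergent sequences, which need not converge, and the needed inequality is not a lower semicontinuity statement for a fixed form since the operator varies with $n$. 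The standard repair is to use that $\EE^{n,(\beta)}(v,w):=\beta(v-\beta G^n_\beta v,w)_m$ is a nonnegative symmetric bilinear form, expand $\EE^{n,(\beta)}(u_n-w,u_n-w)\ge 0$ for a fixed $w\in L^2(E,m)$, and note that $\EE^{n,(\beta)}(u_n,w)=\beta(u_n,w)_m-\beta^2(u_n,G^n_\beta w)_m\to \beta(u,w)_m-\beta^2(u,G_\beta w)_m$ by self-adjointness, strong convergence of $G^n_\beta w$, and boundedness of $\|u_n\|$, while $\EE^{n,(\beta)}(w,w)\to\EE^{(\beta)}(w,w)$. Taking $w=u$ gives $\liminf_n\EE^{n,(\beta)}(u_n,u_n)\ge\EE^{(\beta)}(u,u)$, and then $\EE^n(u_n,u_n)\ge\EE^{n,(\beta)}(u_n,u_n)$ together with the monotone limit $\EE^{(\beta)}(u,u)\uparrow\EE(u,u)$ as $\beta\uparrow\infty$ completes Mosco (a). With this polarization step inserted, your outline is the standard complete proof.
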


\bibliographystyle{abbrv}
\bibliography{TraceBM}


\end{document}